\newtheorem{thm}{Theorem}[section]
\newtheorem{lem}[thm]{Lemma}
\newtheorem{cor}[thm]{Corollary}
\newtheorem*{rmk}{Remark}
\newtheorem{defn}[thm]{Definition}
\newtheorem*{mainthm}{Theorem \ref{thm:pattern1}}
\newtheorem*{maincor}{Corollary \ref{cor:Wd}}
\newtheorem*{thmtw}{Theorem \ref{thm:twist}}
\newtheorem*{3braids}{Corollary  \ref{cor:3brds}}
\newtheorem*{corfibered}{Corollary \ref{cor:twist1}}
\def \Z{\mathbb{Z}}
\def \T{\mathcal{T}}
\def \K{\mathbb{K}}
\def \C{\mathcal{C}}
\begin{document}

\title[Cosmetic crossings of twisted knots]{Cosmetic crossings of twisted knots}
\author[C. J. Balm and E. Kalfagianni]{Cheryl Jaeger Balm and Efstratia Kalfagianni}

\address[]{Department of Mathematics, Kansas State
University, Manhattan, KS 66502}

\email[]{cjbalm@math.ksu.edu}

\address[]{Department of Mathematics, Michigan State
University, E Lansing, MI, 48824} \ \ \
\email[]{kalfagia@math.msu.edu}

\thanks{ {Research supported by NSF grant DMS-1105843.
}}

\begin{abstract}  We prove that the property of admitting no cosmetic crossing changes 
is preserved under the operation of forming certain satellites of winding number zero. 
We also define strongly cosmetic crossing changes and we discuss their
behavior under the operation of inserting full twists in the strings of closed braids.

%
%
%
\end{abstract}

\maketitle

\bigskip


\section{Introduction}

 A \emph{crossing disk} for an oriented knot $K\subset S^3$ is an embedded disk $D\subset S^3$ such that $K$ intersects ${\rm int}(D)$ twice with zero algebraic intersection number.  A crossing change on $K$ can be achieved by performing $(\pm 1)$-Dehn surgery of $S^3$ along the \emph{crossing circle} $L = \partial D$.  More broadly,  a  \emph{generalized crossing change} of order $q \in \Z - \{ 0 \}$
 is achieved by $(-1/q)$-Dehn surgery along the crossing circle $L$ and results in introducing $q$ full twists to $K$ at the \emph{crossing disk} $D$ bounded by $L$.  
A (generalized) crossing change of $K$ and its corresponding crossing circle $L$ are called \emph{nugatory} if $L$ bounds an embedded disk in $S^3 - \eta(K)$, where $\eta(K)$ denotes a regular neighborhood of $K$ in $S^3$.  Obviously, a generalized crossing change of any order at a nugatory crossing of $K$ yields a knot isotopic to $K$.  
 \begin{defn}{\rm
A (generalized) crossing change on $K$ and its corresponding crossing circle are called \emph{cosmetic} if the crossing change yields a knot isotopic to $K$ and is performed at a crossing of $K$ which is \emph{not} nugatory.}
\end{defn}

%

It is an open question whether there exist knots that admit cosmetic crossing changes.
(See Problem 1.58 on Kirby's list \cite{kirby}.)
This question, often referred to as the \emph{nugatory crossing conjecture},   has motivated quite a bit of research over the years, and  it has been answered in the negative for many classes of knots.  Scharlemann and Thompson showed that the unknot admits no  cosmetic generalized crossing changes in \cite{schar-thom} using work of Gabai \cite{gabai}.  It has been shown by Torisu that the answer is also no for 2-bridge knots \cite{torisu},
and  by the second author that the answer is no for fibered knots \cite{kalf}. Obstructions to cosmetic crossing changes in genus-one knots were found by the authors with Friedl and Powell in \cite{balm}, where it is shown that genus-one, algebraically non-slice knots admit no cosmetic generalized crossing changes. The objective of the current paper is to study the behavior of potential cosmetic crossing changes under the operations of twisting and forming satellites.

To state our results, let $\K$ denote the class of knots which do not  admit cosmetic generalized crossing changes.  By the previous paragraph, $\K$ contains all fibered knots, 2-bridge knots and genus-one, algebraically non-slice knots.  Torisu shows in \cite{torisu} that the connect sum of two or more knots in $\K$ is also in $\K$.

\begin{defn}\label{def:wrap}{\rm Given a knot $K$ embedded in a solid torus $V$, the \emph{winding number}, $w(K,V)$, is the  algebraic intersection number of $K$ with a meridional disk of $V$.  Similarly, the \emph{wrapping number}, $\textrm{wrap}(K,V)$, is the minimal geometric intersection of $K$ with a meridional disk. } 
\end{defn}

In \cite{balm2}, the first author shows that any prime satellite knot with pattern a non-satellite knot in  $\K$ does not admit cosmetic
generalized  crossing changes of order greater than $5$. Here we restrict ourselves to satellites with winding number zero, and we obtain the following stronger result.

\begin{thm}\label{thm:pattern1} Let $C$ be a prime knot that is not a torus knot or a cable knot and let $V'$ be a standardly embedded solid torus in $S^3$.  Let $K'\in \K$  and suppose that $K'$ is embedded in $V'$ so that it is geometrically essential and such that  $w(K',V')=0$ and $V'\setminus  \eta(K')$ is atoroidal.  Then any knot that is a satellite of $C$ with pattern $(V',  K')$  admits no  cosmetic generalized crossing changes of any order.
\end{thm}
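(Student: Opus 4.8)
Assume, for contradiction, that some satellite $K$ of $C$ with pattern $(V',K')$ admits a cosmetic generalized crossing change of order $q$, realized by a crossing circle $L$ bounding a crossing disk $D$ with $|D\cap K|=2$. Write $V\subset S^3$ for the image of $V'$ under the satellite embedding, so that $V$ is a tubular neighborhood of $C$, $K\subset \mathrm{int}(V)$, and $T:=\partial V$ is the companion torus; set $E(C):=S^3\setminus\mathrm{int}(V)$ and $W:=V\setminus\eta(K)=V'\setminus\eta(K')$. Since $K'$ is geometrically essential in $V'$ and $C$ is non-trivial (it is prime and not a torus knot), $T$ is incompressible and not boundary-parallel in $E(K):=S^3\setminus\eta(K)$. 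Note also that $C$ being neither a torus knot nor a cable knot is exactly the statement that $E(C)$ contains no essential annulus.

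The first and main step is to put $D$ in good position with respect to $T$. After an ambient isotopy of $S^3$ fixing $K$ setwise I would make $D$ transverse to $T$ and minimize $|D\cap T|$, which becomes a disjoint union of circles once $L$ has been isotoped off $T$ (possible since $L$ is unknotted). Innermost-disk arguments, using incompressibility of $T$ in $E(K)$ together with irreducibility of knot complements, remove every circle of $D\cap T$ that is inessential on $T$ and bounds a subdisk of $D$ disjoint from $K$; when such a subdisk meets $K$ in a single point, the parity of its intersection number with a meridional disk of $V$ contradicts $w(K,V)=w(K',V')=0$. After these reductions the surviving possibilities are: (i) $D\cap T=\emptyset$; (ii) $D\cap T$ consists of circles essential on $T$, necessarily meridians of $V$, since no essential curve of $T$ bounds a disk in $E(C)$ while only meridians of $V$ bound disks in $V$; or (iii) a single innermost subdisk of $D$ carries both punctures. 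In case (ii) an outermost annular piece of $D$ joining $L$ to a meridian of $V$ lies either in $V$—forcing $L$ to be isotopic in $V$ to the core $C$, impossible as $L$ bounds a disk in $S^3$—or in $E(C)$, where it is either boundary-parallel, allowing a reduction of $|D\cap T|$, or an essential annulus, which cannot exist. Iterating, we reach case (i), so $D$, hence $L$, lies in $\mathrm{int}(V)$; case (iii) reduces to $L$ being isotopic to a meridian of $V$, the crossing circle of a pattern-twisting crossing disk inside $V$. In either event the crossing change is supported in $V$.

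Next I transfer the cosmetic equation to the solid torus. The surgery producing $K_q\cong K$ is supported in $V$, so it does not touch $E(C)$ and replaces $W$ by $W_q:=V'\setminus\eta(K'_q)$, where $K'_q\subset V'$ is obtained from $K'$ by the internal crossing change. Because $V'\setminus\eta(K')$ is atoroidal while $E(C)$ is atoroidal-or-hyperbolic and anannular, $T$ is, up to isotopy, the unique essential torus of $E(K)$; the same holds for $E(K_q)$. Hence any homeomorphism $E(K_q)\to E(K)$ can be isotoped to carry $T$ onto $T$ and to restrict to homeomorphisms $E(C)\to E(C)$ and $W_q\to W$ respecting peripheral structure on both boundary tori; standard rigidity results for knots with atoroidal complement in a solid torus then upgrade the latter to an isotopy of $K'_q$ onto $K'$ in $V'$, hence in $S^3$. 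Thus the internal crossing change along $L$ is a cosmetic-or-nugatory crossing change for $K'$, and since $K'\in\K$ it is nugatory, so $L$ bounds a disk in $V'\setminus\eta(K')$. (In case (iii), where $L$ is a meridian of $V'$, the crossing change twists the pattern, and triviality of the result again contradicts $K'\in\K$, now via the non-nugatory crossing circle $\partial(\text{meridian disk of }V')$.) Since $\partial V'$ is incompressible in $S^3\setminus\eta(K')$, this disk can be pushed into $V'\setminus\eta(K')=V\setminus\eta(K)\subset E(K)$, so the crossing change on $K$ is nugatory, contradicting the assumption that it is cosmetic.

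The step I expect to be hardest is the good-position analysis together with the transfer across $T$: controlling every circle of $D\cap T$—in particular handling the configuration in which one subdisk of $D$ carries both punctures—and, once the crossing change is confined to $V$, promoting the $S^3$-isotopy $K_q\cong K$ to an isotopy $K'_q\cong K'$ inside $V'$. Both points rest on the anannularity of $E(C)$, the atoroidality of $V'\setminus\eta(K')$, and the vanishing winding number, and none of them involves the order $q$; this is why the conclusion holds for crossing changes of every order, in contrast with the order $>5$ restriction of \cite{balm2}.
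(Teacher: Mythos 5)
Your overall strategy (confine the crossing disk to $V$, transfer the cosmetic equation to the pattern $(V',K')$, and then derive a contradiction from $K'\in\K$ together with a nugatory--transfer argument) is the same as the paper's, but two of your key steps have genuine gaps. First, your transfer across $T$ rests on the claim that ``$T$ is, up to isotopy, the unique essential torus of $E(K)$.'' This is false under the stated hypotheses: $C$ is only assumed prime, non-torus and non-cable, so $C$ may itself be a satellite knot, in which case $E(C)\subset E(K)$ contains further essential tori. You therefore cannot conclude that a homeomorphism $E(K_q)\to E(K)$ carries $T$ to $T$ without distinguishing $T$ among the essential tori; the paper does exactly this by fixing a Haken collection $\T$ containing $T$, splitting into the cases where $T$ is or is not innermost with respect to $K$, proving that $V-\eta(K\cup L)$ is atoroidal (the two subcases with the solid torus $\widehat F$), and then invoking Motegi's Lemma~2.3 to upgrade the ambient isotopy $K(q)\simeq K(0)$ to one with $\Phi(V)=V$. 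Your appeal to unspecified ``standard rigidity results'' papers over precisely this part of the argument.

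Second, and more seriously, your last step is wrong as written. From $K'\in\K$ you correctly conclude that $L'$ is nugatory for $K'$, but nugatory means $L'$ bounds a disk in $S^3-\eta(K')$, not in $V'-\eta(K')$. Your justification for pushing that disk into $V'$ --- ``since $\partial V'$ is incompressible in $S^3\setminus\eta(K')$'' --- fails: $V'$ is standardly embedded, so $\partial V'$ compresses in the complementary solid torus $S^3-V'\subset S^3-\eta(K')$ along the longitude, and a nugatory disk for $L'$ can genuinely run through that region, where it does not survive the satellite re-embedding. Bridging this gap is the content of the paper's Lemma~\ref{lem:nugasat}, whose proof is not an innermost-disk push but an application of McCullough's theorem on Dehn twists along boundary curves, using the homeomorphism $h:V'\to V'$ carrying $K'(q)$ to $K'(0)$. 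Finally, your good-position analysis of $D\cap T$ is a different (and in principle viable) route from the paper's Lemma~\ref{lem:LinV}, which instead pushes a minimal-genus Seifert surface into $V$ using Gabai's theorem and the winding-number-zero hypothesis; but as written your version is too sketchy to check --- the ``parity'' argument for once-punctured subdisks is garbled (the correct point is that an inessential circle of $D\cap T$ caps off to a sphere meeting $K$ an even number of times), and your case (iii) should simply collapse to case (i) by pushing the boundary-parallel annulus into $V$ rather than being carried as a separate case through the rest of the proof.
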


Theorem \ref{thm:pattern1} has the following corollary, which generalizes  Corollary 7.1(b)  of  \cite{balm}, where only ordinary crossing changes in
twisted Whitehead doubles are treated.

\begin{cor}\label{cor:Wd} Let $K$ be a prime knot that is not a torus knot or a cable knot. 
Then no Whitehead double of $K$ admits a cosmetic generalized crossing change of any order.
\end{cor}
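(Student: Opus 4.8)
The plan is to realize every Whitehead double of $K$ as a satellite knot to which Theorem \ref{thm:pattern1} applies, and then simply invoke that theorem. Recall that an $n$--twisted Whitehead double of $K$ (for $n\in\Z$ and either sign of clasp) is the satellite knot with companion $C=K$ and pattern $(V',K')$, where $V'$ is an unknotted, hence standardly embedded, solid torus in $S^3$ and $K'\subset V'$ is the Whitehead pattern with $n$ full twists inserted between its two strands. (Equivalently, one keeps the untwisted Whitehead pattern and varies the framing with which $V'$ is embedded along $K$; such knots are all covered by the conclusion of Theorem \ref{thm:pattern1} about an arbitrary satellite of $C$ with pattern $(V',K')$.) By construction $K'$ has wrapping number $2$ and winding number $0$ in $V'$; in particular $w(K',V')=0$ and $K'$ is geometrically essential. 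Inserting full twists never cancels the clasp, so the wrapping number remains $2$ for every $n$, and no value of the parameter produces a degenerate pattern.

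Next I would check the remaining hypotheses. The companion hypothesis --- $C=K$ is prime and is neither a torus knot nor a cable knot --- is exactly the hypothesis of the corollary. As a knot in $S^3$, $K'$ is obtained from the Whitehead pattern by twisting, hence is a twist knot (for the relevant value of $n$ it is the unknot). If $K'$ is the unknot then $K'\in\K$ by the theorem of Scharlemann--Thompson \cite{schar-thom}; otherwise $K'$ is a $2$--bridge knot, so $K'\in\K$ by Torisu's theorem \cite{torisu}.

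The one point that requires genuine work is verifying that $X:=V'\setminus\eta(K')$ is atoroidal, and I expect this to be the main obstacle. Identifying the standardly embedded solid torus $V'$ with the exterior of an unknot $U\subset S^3$ (the core of the complementary solid torus) shows that $X$ is the exterior of the two--component link $U\cup K'$, whose components are unknots with linking number $w(K',V')=0$; this link is precisely an $n$--twisted Whitehead link. Each such link is non-split, prime and alternating, and none of them is a torus link, so by Menasco's theorem each has hyperbolic exterior; in particular $X$ is atoroidal. (The finitely many small-twist cases may alternatively be verified by hand, and twisted Whitehead doubles are discussed from this viewpoint in \cite{balm}.)

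With every hypothesis of Theorem \ref{thm:pattern1} in place for the pattern $(V',K')$ and companion $C=K$, the theorem gives that no Whitehead double of $K$ admits a cosmetic generalized crossing change of any order, which is the assertion of Corollary \ref{cor:Wd}.
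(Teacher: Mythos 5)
Your proof is correct and follows the same strategy as the paper: both reduce the corollary to a verification of the hypotheses of Theorem \ref{thm:pattern1}. The one place you diverge is in the choice of parametrization. The paper keeps the pattern fixed as the \emph{untwisted} Whitehead pattern $(V',U)$ with $U$ the unknot and lets the framing of the satellite embedding $f:V'\to V$ absorb the $n$ twists; this makes the verification nearly immediate, since the pattern knot is always the unknot (so it lies in $\K$ by Scharlemann--Thompson alone) and $V'\setminus\eta(U)$ is always the Whitehead link exterior. You instead fix the $0$-framing and put the $n$ full twists into the pattern, which forces you to invoke Torisu's $2$-bridge result for the twist-knot patterns and Menasco's theorem for atoroidality of the twisted Whitehead link exteriors. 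Both routes are valid (and you correctly note the equivalence in your parenthetical, which is in effect the paper's proof); for your atoroidality step there is also a shortcut that avoids Menasco: the meridional Dehn twist of $V'$ carrying the untwisted pattern to the $n$-twisted one is a homeomorphism of $V'$, so all the exteriors $V'\setminus\eta(K')$ are homeomorphic to the Whitehead link exterior, which is hyperbolic.
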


\begin{defn}\label{def:twistK}
{\rm  We will say a torus $T$ is \emph{standardly embedded} in $S^3$ if $T$ bounds a solid torus on both sides. Let $V$ be a solid torus standardly embedded in $S^3$.  Let $K$ be a knot that is a closed $m$-braid in $V$, in the sense that  $m=w(K,V) = { \rm{wrap}}(K, V) $ and $K$ intersects each meridian disk exactly $m$-times.

Given  $n\in \Z$, let $K_{n, V}$ denote the image of $K$ under the $n^{\rm th}$ power of a meridional Dehn twist of 
$V$. In other words, $K_{n, V}$ is obtained from $K$ via 
$n$ full twists along a meridian disk $D \subset V$. Note that if $n=0$, $K_{n, V}$ is simply the given embedding of $K$ in $V$. When there is no danger of confusion, we will simply write $K_n$ instead of $K_{n, V}$. 

Let $K_{n, V}$ be as above and let $K_{n, V}(q)$ denote a knot obtained from $K_{n, V}$
by an order-$q$ crossing change in $V$. We say that the crossing  change is \emph{strongly cosmetic}
if $K_{n, V}$ and $K_{n, V}(q)$ are isotopic knots in $V$.}
\end{defn}

\begin{thm}\label{thm:twist}
Let $K' \in \K$ be embedded as   an $m$-braid in  a  standard solid torus $V'$. Then, for every $n\in \Z$,  the twist knot  $K'_{n, V'}$  does not admit a strongly cosmetic generalized crossing change of any order.
\end{thm}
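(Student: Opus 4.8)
\section*{Proof proposal for Theorem \ref{thm:twist}}

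The plan is to reduce a hypothetical cosmetic generalized crossing change on $K'_{n,V'}$ to one on $K'$ itself, by ``un‑twisting'', and to treat the cases where the relevant crossing disk cannot be pushed off the boundary of $V'$ by a combination of a wrapping‑number argument and a nugatory‑crossing argument. First I would dispose of the trivial normalizations. If $n=0$ then $K'_{0,V'}=K'\in\K$ and there is nothing to prove, so assume $n\neq 0$; and if $m=1$ then $K'$, being a $1$‑braid in a standard solid torus, is isotopic to the core of $V'$, hence unknotted, and so is $K'_{n,V'}$, which therefore admits no cosmetic generalized crossing change by Scharlemann--Thompson. Thus assume $m\geq 2$. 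Let $W'=S^3-\eta(V')$, a solid torus since $V'$ is standard, let $D\subset V'$ be the meridian disk used in Definition \ref{def:twistK}, and recall that $(S^3,K'_{n,V'})$ is obtained from $(S^3,K')$ by inserting $n$ full twists along $D$, equivalently by $(-1/n)$‑surgery on the unknot $J=\partial D$; in particular $K'_{n,V'}$ is again an $m$‑braid in $V'$ with braid axis the core of $W'$, and $\mathrm{lk}(K'_{n,V'},J)=m$. Suppose for contradiction that $K'_{n,V'}$ admits a cosmetic generalized crossing change of order $q$, with crossing circle $L$, crossing disk $E$ (so $|E\cap K'_{n,V'}|=2$ with opposite signs), and $L$ not nugatory.

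Next I would put $E$ into good position with respect to the torus $T=\partial V'$, isotoping $E$ (keeping it a crossing disk for $K'_{n,V'}$) so as to minimize $|E\cap T|$. Since $S^3$ is irreducible and $|E\cap K'_{n,V'}|=2$, the usual innermost‑disk arguments (an innermost inessential curve of $E\cap T$ bounds a subdisk of $E$ lying in $V'$ or in $W'$; in the latter case it misses $K'_{n,V'}$ and in the former it carries at most two intersection points, and in either case one caps off and pushes across a ball) remove all inessential curves of $E\cap T$. Hence all curves of $E\cap T$ are essential on $T$, so mutually parallel, and we are left with three possibilities: (a) $E\cap T=\varnothing$, which forces $E\subset V'$ since $E$ meets $K'_{n,V'}\subset\mathrm{int}(V')$; (b) the curves of $E\cap T$ are meridians of $V'$; or (c) they are longitudes of $V'$ (i.e.\ meridians of $W'$). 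Case (b) cannot occur for $m\geq 2$: an innermost subdisk $E_0\subset E$ then has meridional boundary and, since a meridian of $V'$ is essential in $W'$, must lie in $V'$; so $E_0$ is a meridional disk of $V'$ and $|E_0\cap K'_{n,V'}|\geq\mathrm{wrap}(K'_{n,V'},V')=m\geq 2$. But $E_0\subset E$ and $|E\cap K'_{n,V'}|=2$, so $m=2$ and $E_0\cap K'_{n,V'}=E\cap K'_{n,V'}$; yet a meridional disk meets the braid $K'_{n,V'}$ coherently, contradicting that $E$ is a crossing disk.

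In case (a) the crossing circle $L$, the crossing disk $E$, and the entire order‑$q$ crossing change are supported inside $V'$. Here I would try to apply the inverse twist: the meridional Dehn twist $\tau$ of $V'$ is a self‑diffeomorphism with $\tau^{n}(K')=K'_{n,V'}$, so $\tau^{-n}$ carries $K'_{n,V'}$ to $K'$ and carries $(L,E)$ to a crossing circle and crossing disk for $K'$, realizing an order‑$q$ crossing change on $K'$. The hard part — and I expect this to be the main obstacle of the whole proof — is that $\tau^{-n}$ is only a diffeomorphism of $V'$, not of $S^3$, so the ambient isotopy of $S^3$ exhibiting the crossing change on $K'_{n,V'}$ as cosmetic need not survive the un‑twisting, and indeed ``$\tau^n$'' need not be injective on isotopy classes of closed braids in $V'$. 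To get around this I would first check that the crossing‑changed knot is again an $m$‑braid in $V'$ (one can arrange the supporting ball of the crossing change to miss a meridian disk of $V'$, and adding full twists along $E$ keeps the strands monotone), and then push the $S^3$‑isotopy back through the twist by working with the surgery‑dual unknot $\widehat{J}$, for which $(S^3,K')$ is recovered from $(S^3,K'_{n,V'})$ by Dehn surgery on $\widehat{J}$ and $\mathrm{lk}(K'_{n,V'},\widehat{J})=m$: using a uniqueness statement for how an unknotted solid torus carrying $K'_{n,V'}$ as an $m$‑braid sits in the exterior of $K'_{n,V'}$, one arranges the isotopy to fix $V'$, hence $\widehat{J}$ up to isotopy, so that it descends through the $\widehat{J}$‑surgery to an isotopy in $S^3$ certifying that the induced crossing change on $K'$ is cosmetic; a parallel argument shows the induced crossing circle is still non‑nugatory. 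This contradicts $K'\in\K$.

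It remains to rule out case (c), which I expect to be the most technical point. Here every innermost subdisk of $E$ is a meridian disk of the solid torus $W'$, meeting the braid axis once and disjoint from $K'_{n,V'}$, and cutting $E$ along $T$ exhibits $E$ as a union of planar pieces alternating between $V'$ and $W'$; the pieces in $W'$ are essential annuli and disks in a solid torus, hence boundary‑parallel, and I would isotope them across $T$ to reduce $|E\cap T|$, contradicting minimality, unless the way these pieces are threaded by the braid axis ultimately forces the two points of $E\cap K'_{n,V'}$ to lie on a single strand of the braid $K'_{n,V'}$ — in which case $L$ bounds a disk in the complement of $K'_{n,V'}$, i.e.\ $L$ is nugatory, contradicting our standing assumption. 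Carrying out the annulus‑cancellation and the nugatory dichotomy carefully in case (c), and establishing the solid‑torus uniqueness used for the descent in case (a), are the two places where the real work lies; everything else is the routine normalization above. Combining all cases yields the contradiction, so $K'_{n,V'}$ admits no cosmetic generalized crossing change of any order. $\qed$
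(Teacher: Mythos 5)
Your overall strategy (push the crossing disk into $V'$, untwist, and derive a contradiction with $K'\in\K$) is the same as the paper's, but two of the steps you explicitly defer are exactly where the content of the theorem lives, and your sketches of them do not work as stated. First, your case (c): the pieces of $E$ lying in $W'$ with boundary a meridian of $W'$ are \emph{meridian disks} of the solid torus $W'$, and a meridian disk is essential, not boundary-parallel, so ``isotope them across $T$ to reduce $|E\cap T|$'' is not available; and the fallback dichotomy (``the two points of $E\cap K'_{n,V'}$ lie on a single strand, hence $L$ is nugatory'') is a non sequitur --- two intersection points on one strand do not produce an embedded disk for $L$ in the knot complement. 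The paper avoids this entirely by never trying to normalize the crossing disk directly: Lemma \ref{lem:LinVtwist} takes a minimal genus Seifert surface $S$ for $K$ in $\overline{S^3-\eta(L)}$ meeting $D$ in a single arc $\alpha$, uses Gabai's theorem to certify that $S$ and $S(q)$ are minimal genus in $S^3$, shows the components of $S\cap W'$ are meridian disks or boundary-parallel annuli of $W'$, and then slides $\alpha$ (hence $L$ and $D$, which live in a neighborhood of $\alpha$) into $V'$ along $S$. You would need to either adopt that argument or supply a genuinely different resolution of case (c).

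Second, in case (a) the untwisting gives you that $K'(q)$ and $K'(0)$ are isotopic in $S^3$, so the conclusion is only that $L'$ is \emph{either} cosmetic \emph{or} nugatory for $K'$; the cosmetic alternative contradicts $K'\in\K$, but the nugatory alternative must be converted back into a contradiction for $K$. Your phrase ``a parallel argument shows the induced crossing circle is still non-nugatory'' hides the real difficulty: $L'$ nugatory for $K'$ means $L'$ bounds a disk in $S^3-\eta(K')$, but that disk may run through $\partial V'$, and the twist homeomorphism $f$ is defined only on $V'$, so you cannot transport it to a disk for $L$ unless you first show $L'$ bounds a disk inside $V'-\eta(K')$. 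This is the paper's Lemma \ref{lem:nugasat}, whose proof is not soft --- it uses the homeomorphism $h=f^{-1}\circ\Phi\circ f$ together with McCullough's theorem on homeomorphisms restricting to Dehn twists on boundary tori. (Your appeal to a ``uniqueness statement'' for the unknotted solid torus containing $K'_{n,V'}$ as an $m$-braid, needed to make the ambient isotopy fix $V'$, is also asserted rather than proved, though the paper is similarly brief there, invoking the Motegi-type argument from the proof of Theorem \ref{thm:pattern1}.) As written, the proposal is a correct outline of the easy normalizations plus an accurate identification of where the work lies, but it does not close either of the two essential gaps.
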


Theorem \ref{thm:twist} together with the results of \cite{kalf} imply that twist knots of fibered closed braids do not admit
cosmetic crossing changes of any order. To state this result more precisely, for $m>0$, let $B_m$ denote the $m$-string braid group,
and let $\Delta_m^2$ denote the central element in $B_m$ (the full twist braid). 

\begin{defn}{\rm
A \emph{fibered  $m$-braid} is a closed braid on $m$ strands which is also a fibered knot. Given a fibered $m$-braid $K$ and  $n\in \Z$, the knot  $K_{n}$  obtained by inserting a  copy of $\Delta_m^{2n}$ ($n$ full twists) into the strings of $K$ is called a \emph{twisted fibered  braid}.}
\end{defn}

As a corollary of Theorem \ref{thm:twist} we obtain the following.

 \begin{cor}\label{cor:twist1} Let $K$ be a fibered $m$-braid with $m\geq 2$.  Then for every $n\in \Z$, the twisted fibered braid $K_{n}$  admits no strongly  cosmetic crossing changes of any order.
\end{cor}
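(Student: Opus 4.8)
The plan is to obtain Corollary \ref{cor:twist1} as an immediate application of Theorem \ref{thm:twist}, the only work being to check that a twisted fibered braid meets the hypotheses of that theorem. Given a fibered $m$-braid $K$, there are three points to verify: that $K$ lies in the class $\K$; that $K$ is presented as an $m$-braid inside a standardly embedded solid torus in the precise sense of Definition \ref{def:twistK}; and that forming $K_n$ by inserting $\Delta_m^{2n}$ into the strands agrees with applying the $n$-th power of the meridional Dehn twist used to define $K'_{n,V'}$.

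For the first point, a fibered $m$-braid is by definition a fibered knot, so by the second author's theorem \cite{kalf} it admits no cosmetic generalized crossing change; that is, $K\in\K$. For the second, let $A$ be the braid axis of $K$ and put $V := S^3\setminus\eta(A)$. Then $V$ is an unknotted, hence standardly embedded, solid torus, and since $K$ is a closed $m$-braid with axis $A$ it meets every meridian disk of $V$ transversely in exactly $m$ points of the same sign; thus $m = w(K,V) = \mathrm{wrap}(K,V)$, which is exactly the configuration of Definition \ref{def:twistK}. (The hypothesis $m\ge 2$ serves only to exclude the trivial case $m=1$, in which $K$ is the core of $V$ and $K_n$ is the unknot, already handled by Scharlemann--Thompson \cite{schar-thom}.) For the third point, a Dehn twist of $V$ along a meridian disk $D$ rotates the $m$ points of $K\cap D$ by a full turn and therefore inserts one copy of the full-twist braid $\Delta_m^2$ into $K$; iterating $n$ times inserts $\Delta_m^{2n}$, so $K_n$ coincides with the twist knot $K_{n,V}$ of Definition \ref{def:twistK}. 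Applying Theorem \ref{thm:twist} with $K'=K$ and $V'=V$ now gives that $K_n$ admits no cosmetic generalized crossing change of any order, which is precisely the assertion of the corollary.

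I do not anticipate a genuine obstacle: the content of the corollary is carried entirely by Theorem \ref{thm:twist} together with \cite{kalf}. The one place where a line of justification is really needed --- and the only thing I would write out in detail --- is the identification of the two twisting operations in the third point, since Definition \ref{def:twistK} phrases the twisting via Dehn twists of the solid torus while the corollary phrases it via braid words; this is a standard fact about closed braids, but it is worth recording explicitly.
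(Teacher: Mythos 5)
Your proposal is correct and follows essentially the same route as the paper: cite \cite{kalf} to place the fibered braid in $\K$, realize the insertion of $\Delta_m^{2n}$ as the $n$-th power of a meridional Dehn twist of the solid torus containing the closed braid, and invoke Theorem \ref{thm:twist}. The extra detail you supply (taking $V$ to be the complement of the braid axis and identifying the two twisting operations) is exactly the verification the paper leaves implicit.
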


Knots that are closures of braids on three strands fit into the setting of Corollary \ref{cor:twist1}.  This is more fully discussed in Section \ref{s:pfs} and leads to the following result.

\begin{cor} \label{cor:3brds} Let $K$ be a knot  that can be represented as the closure of a 3-braid. Then $K$ admits no strongly cosmetic crossing changes of any order.
\end{cor}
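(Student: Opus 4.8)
The plan is to reduce Corollary~\ref{cor:3brds} to Corollary~\ref{cor:twist1} by feeding in the classification of $3$-braids up to conjugacy. Let $K$ be a knot realized as the closure $\hat\beta$ of a braid $\beta\in B_3$, and let $\Delta_3^2=(\sigma_1\sigma_2)^3$ denote the full twist, a generator of the center $Z(B_3)$. Murasugi's classification of $3$-braids \cite{murasugi} asserts that $\beta$ is conjugate in $B_3$ to $\Delta_3^{2d}\gamma$ for some $d\in\Z$, where $\gamma$ is a homogeneous braid word, i.e.\ each of $\sigma_1$ and $\sigma_2$ occurs in $\gamma$ with only one sign. Since conjugate braids have isotopic closures (and since $\Delta_3^{2d}$ is central), $K$ is isotopic to the closure of $\Delta_3^{2d}\gamma$, which is precisely the knot obtained from $\hat\gamma$ by inserting $d$ copies of the full twist $\Delta_3^{2}$ into the three strands. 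Thus, in the terminology preceding Corollary~\ref{cor:twist1}, $K=(\hat\gamma)_d$, where $\hat\gamma$ is viewed as a closed $3$-braid in the standard solid torus complementary to the braid axis.

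Next I would identify $\hat\gamma$ as a fibered knot. Because $\Delta_3^{2d}$ is a pure braid, $\gamma$ and $\beta$ induce the same permutation of the three strands; since $\hat\beta=K$ is a knot this permutation is a $3$-cycle, so $\hat\gamma$ is again a knot and, in particular, both $\sigma_1$ and $\sigma_2$ must occur in $\gamma$ (otherwise the permutation would fix a strand). A homogeneous braid in which every generator occurs has fibered closure by Stallings \cite{stallings}; hence $\hat\gamma$ is a fibered knot, which, realized as a closed $3$-braid, is a fibered $3$-braid in the sense of the definition preceding Corollary~\ref{cor:twist1}. Consequently $K=(\hat\gamma)_d$ is a twisted fibered braid with $m=3\geq 2$, and Corollary~\ref{cor:twist1} shows that $K$ admits no cosmetic crossing change of any order. (When $\hat\gamma$ is the unknot, so that $\gamma$ is conjugate to $\sigma_1\sigma_2$ up to mirror image, $K$ is a torus knot $T(3,q)$ with $\gcd(q,3)=1$; such a knot is fibered, so one may instead invoke \cite{kalf} directly, and the unknotted case is in any event covered by \cite{schar-thom}.)

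The only substantive input is the structural statement that, after conjugation and extraction of the central full twists, every $3$-braid becomes homogeneous; this is where the argument rests on Murasugi's description of the conjugacy classes of $B_3$ (equivalently, of $B_3/Z(B_3)\cong \mathrm{PSL}(2,\Z)$), together with Stallings' fibration criterion to see that the residual braids close to fibered knots. Once that is granted, the remaining work is routine: tracking strand permutations to discard the cases where the closure is a split or multi-component link, and then quoting Corollary~\ref{cor:twist1}. All of the geometric content has already been absorbed into the proof of Theorem~\ref{thm:twist}.
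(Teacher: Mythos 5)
Your proposal is correct and follows essentially the same route as the paper: both reduce to Corollary \ref{cor:twist1} by using the classification of $3$-braids up to conjugacy (the paper cites Schreier's normal form, you cite Murasugi's equivalent classification) to write the braid as a central full twist times a homogeneous word, discard the conjugacy classes whose closures are not knots, and invoke Stallings' fibration theorem on the homogeneous remainder. The only cosmetic difference is that you rule out the degenerate cases via the induced strand permutation rather than by inspecting the explicit list of normal forms.
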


 This paper is organized as follows.  In Section \ref{s:background} we give the necessary definitions and lemmas to prove Theorems \ref{thm:pattern1} and \ref{thm:twist}.  Then in Section \ref{s:pfs} we prove these results and their corollaries.
\newpage

\section{Crossing circles and companion tori}\label{s:background}

We begin by recalling some definitions.

\begin{defn}{\rm Let $V'$ be a standardly embedded solid torus in $S^3$, and let $K'$ be a knot embedded in $V'$ so that $K'$ is geometrically essential in $V'$ and not
the core of $V'$.  Let  $f:(V', K') \to  S^3$ be an embedding such that $V=f(V')$ is a knotted solid torus in $S^3$.  A \emph{satellite knot} with \emph{pattern} $K'$ is the image $K = f(K')$.  If $C$ is the core of the solid torus $V$, then $C$ is a \emph{companion knot}
 of $K$, and we may call $K$ a \emph{satellite of $C$}.  The torus $T=\partial V$ is a \emph{companion torus} of $K$.  We may similarly define a \emph{satellite link} if $K'$ is a non-split link.}
\end{defn}

Given a 3-manifold $N$ and a submanifold $F \subset N$ of co-dimension 1 or 2, $\eta (F)$ will denote a regular neighborhood of $F$ in $N$.  For a knot or link $K \subset S^3$, we define $M_K = \overline{S^3 - \eta(K)}$.  

Given a knot $K$ with a crossing circle $L$, let $K(L,q)$ denote the knot obtained via an order-$q$ generalized crossing change at $L$.  We will simply write $K(q)$ for $K(L,q)$ when there is no danger of confusion about the crossing circle in question.  We will also use the notation $K(0)$ when we wish to be clear that we are referring to the embedding of $K$ in $S^3$ before any crossing change occurs.

In general, given a knot $K$ and a crossing circle $L$ for $K$, let $M(q)$ denote the 3-manifold obtained from $M_{K \cup L}$ via a Dehn filling of slope $(-1/q)$ along $\partial \eta (L)$.  So for $q \in \Z - \{ 0 \},\ M(q) = M_{K(q)}$ and $M(0) = M_K$.  

The first lemma which we will need in the proof of the results stated in the introduction is the following.

\begin{lem}\label{lem:LinV}
Let $K$ be a satellite knot, $T$ be a companion torus for $K$,  and $V$ be the solid torus bounded by $T$ in $S^3$.
Suppose that  $w(K, V) = 0$ and that there are no essential annuli in $\overline{S^3 - V}$.   If $L$  is a cosmetic crossing circle
 for $K$ we can isotope it so that $L $ and a crossing disk bounded by $L$ both lie in $V$.
\end{lem}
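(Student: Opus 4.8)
The plan is to start with a cosmetic crossing circle $L$ for $K$ and a crossing disk $D$ bounded by $L$, and to isotope the pair $(L,D)$ into $V$ by an innermost-disk/outermost-arc argument applied to the intersection $D\cap T$. Since $L=\partial D$ is disjoint from $K$ and has zero linking number with $K$, and since $w(K,V)=0$, the main point will be to push $D$ off $T$ and then decide on which side of $T$ it ends up. First I would put $D$ in general position with respect to $T$, so that $D\cap T$ is a disjoint union of simple closed curves in the interior of $D$ (note $\partial D=L$ can be assumed disjoint from $T$ after a small isotopy, arranging $L$ to lie either inside or outside $V$ — this is where the winding number and linking hypotheses get used to show $L$ is not forced to cross $T$ essentially). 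Each curve of $D\cap T$ is either essential or inessential on $T$.

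Next I would eliminate the inessential curves of $D\cap T$ by the standard innermost-circle argument: an innermost such curve $c$ on $D$ bounds a subdisk $D'\subset D$ with interior disjoint from $T$, and $c$ also bounds a disk $\Delta$ on $T$; since $K$ is disjoint from $T$ and $D$ meets $K$ in exactly two points, one checks (using that $c$ is innermost on $D$) that replacing $D'$ by $\Delta$ and pushing across reduces $|D\cap T|$ without changing $L$ or the crossing arcs — after finitely many steps, $D\cap T$ consists only of curves essential on $T$. The curves essential on $T$ are all parallel on $T$ (being disjoint), so an innermost one on $D$ cobounds an annulus $A\subset D$ with no interior intersections with $T$, on one side. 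If $A$ lies in $\overline{S^3-V}$, then $A$ is an annulus in $\overline{S^3-V}$ with boundary essential curves on $T$; by the hypothesis that $\overline{S^3-V}$ contains no essential annuli, $A$ is either boundary-parallel or compressible, and in either case we can isotope $D$ to remove this intersection curve (and its partner). If $A$ lies in $V$, a symmetric innermost argument on the $V$ side — together with the fact that $K$ is geometrically essential in $V$ so that $\partial A$ being a longitude-type curve is controlled — lets us likewise reduce the intersection. Iterating, we reach $D\cap T=\emptyset$.

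Once $D\cap T=\emptyset$, the disk $D$ lies entirely in $V$ or entirely in $\overline{S^3-V}$. If $D\subset \overline{S^3-V}$ then $L=\partial D$ bounds a disk disjoint from $K$, so $L$ is nugatory, contradicting the assumption that the crossing change at $L$ is cosmetic. Hence $D\subset V$, and $L$ together with its crossing disk both lie in $V$, as desired.

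The main obstacle I expect is the analysis of the essential curves of $D\cap T$: ruling out essential annulus pieces on the outside uses the no-essential-annuli hypothesis cleanly, but on the inside ($V$ side) one must use $w(K,V)=0$ and geometric essentiality of $K$ in $V$ carefully to argue that an innermost annulus $A\subset D\cap V$ can always be removed — in particular to handle the case where $A$ together with $T$ bounds a region whose intersection with $K$ is nontrivial. Controlling that case, and making sure the crossing arcs of $D$ (the two arcs meeting $K$) are never destroyed during the isotopies, is the delicate part; everything else is a routine innermost-disk cleanup.
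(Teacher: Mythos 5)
Your overall strategy---putting the crossing disk $D$ in general position with respect to $T$ and removing intersection curves---is genuinely different from the paper's, which never manipulates $D$ directly: the paper takes a minimal genus Seifert surface $S$ for $K$ in $\overline{S^3-\eta(L)}$ meeting $D$ in a single arc $\alpha$, invokes Gabai's theorem to guarantee $S$ stays minimal genus after the twist, uses $w(K,V)=0$ plus an Euler characteristic count to force $S\cap\overline{S^3-V}$ to consist of annuli, pushes those into $V$ using the no-essential-annuli hypothesis, and then drags $D$ along since it can be shrunk to a neighborhood of $\alpha\subset S$. Your route avoids Gabai entirely, which is appealing, but as written it has a genuine gap exactly at the point you flag as ``delicate.''

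The gap is the treatment of the curves of $D\cap T$ that are essential on $T$. First, an innermost such curve on $D$ bounds a \emph{disk}, not an annulus; since $T$ is incompressible in $\overline{S^3-V}$ (the companion torus is knotted), that disk must lie in $V$ and is therefore a meridian disk of $V$. Because $\operatorname{lk}(\partial D', K)=w(K,V)=0$ and $D'\cap K\subset D\cap K$ consists of points of opposite sign, this meridian disk meets $K$ in exactly the two points of $D\cap K$; consequently there is exactly one innermost disk, all essential curves are nested meridians, and $D$ decomposes as that meridian disk together with annuli lying alternately outside and inside $V$. Your proposed ``symmetric innermost argument on the $V$ side'' for the annuli inside $V$ does not exist: the hypothesis only excludes essential annuli in $\overline{S^3-V}$, the relevant curves are meridians rather than ``longitude-type'' curves, and an annulus in $V\setminus\eta(K)$ with meridional boundary may well be essential there. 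The rescue is that the inside annuli never need to be removed---only the outside ones do (they are incompressible, hence boundary-parallel by hypothesis, and isotoping them across $T$ merges the adjacent inside pieces)---and the remaining outermost collar of $D$ containing $L$, if it lies outside $V$, is handled by sliding $L$ along that collar into $V$ rather than by any annulus hypothesis. Without these steps your iteration stalls, so the argument as proposed is incomplete; you should also note that your final dichotomy is vacuous in one branch, since $D$ meets $K\subset\operatorname{int}(V)$ and so can never lie entirely in $\overline{S^3-V}$.
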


\begin{proof}
Let $K$ be as in the statement of the lemma, and suppose that $L$ is an order-$q$ cosmetic crossing circle.  Let $S$ be a minimal genus  Seifert surface for $K$ in $\overline{S^3 - \eta(L)}$, and let $D$ be the crossing disk for $K$ which is bounded by $L$.  We may choose  $S$ so that $S \cap D$ is a single embedded arc $\alpha$.  Then performing $(-1/q)$-surgery at $L$ twists both $K$ and $S$, and produces a surface $S(q) \subset M(q)$ which is a Seifert surface for $K(q)$.  

Note that if $M_{K \cup L}$ were reducible, then $M_{K \cup L}$ would contain a separating 2-sphere which does not bound a 3-ball $B \subset M_{K \cup L}$.  Then $L$ would lie in a 3-ball disjoint from $K$; hence $L$ would bound a disk in this 3-ball, which is in the complement of $K$, so $L$ would be nugatory.  Since $L$ is cosmetic by assumption, we may conclude that $M_{K \cup L}$ is, in fact, irreducible.  Thus we may apply Gabai's Corollary 2.4 of \cite{gabai} to see that $S$ and $S(q)$ are minimal genus Seifert surfaces in $S^3$ for $K$ and $K(q)$, respectively. 

If $S \subset V$, then we may shrink $D$ to a neighborhood of $\alpha$ which is orthogonal to $S$, and hence $D \subset V$.  Assume that $S \not\subset V$, and let $\C = S \cap T$.  We may isotope $S$ so that $\C$ is a collection of simple closed curves which are essential in both $S$ and $T$.  Since $w(K,V)=0$, $\C$ must be homologically trivial in $T$, where each component of $\C$ is given the boundary  orientation  $S\cap V$.  Hence $\C$ bounds a collection of annuli in $T$ which we will denote by $A_0$. 

Let $S_0 = S - (S \cap V)$.  Suppose that $\chi (S_0) < 0$, where $\chi( \cdot )$ denotes the Euler characteristic.  We may create $S^*$ from $S$ by replacing $S_0$ by $A_0$, isotoped slightly, if necessary, so that each component of $A_0$ is disjoint.  Then $S^*$ is a Seifert surface for $K$, and $\chi (S^*) > \chi (S)$ since $\chi (A_0) = 0$.  This contradicts the fact that $S$ is a minimal genus Seifert surface for $K$, so it must be that $\chi (S_0) \geq 0$.  Since $S_0$ contains no closed component, and no component of $\C$ bounds a disk in $S$, we conclude that $S_0$ consists of annuli.

By assumption, there are no essential annuli in $\overline{S^3 - V}$, so each component of $S_0$ must be boundary parallel in $\overline{S^3 - V}$.  Thus we can isotope $S_0$ so that $S \subset V$, and therefore $D$ can be isotoped into $V$ as well. \end{proof}

This leads us to the following lemma, used in the proof of Theorem \ref{thm:twist}.

\begin{lem}\label{lem:LinVtwist} 
 Let $V'$ be a solid torus standardly embedded in $S^3$, let $K'$ be a knot that is 
an $m$-braid in $V'$, and let $K=K'_n$ be a twist knot of $K'$ for some $n \in \Z$.  If $L$ is a cosmetic crossing circle of $K$, then we can isotope it so that $L $ and a crossing disk bounded by $L$ lie in $V$.
\end{lem}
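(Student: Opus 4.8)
The plan is to imitate the proof of Lemma~\ref{lem:LinV}, the only real difference being that here the winding number $w(K,V')=m$ need not be zero; the role that the ``no essential annuli'' hypothesis played there will be taken over by the fact that $V'':=\overline{S^3-V'}$ is an \emph{unknotted} solid torus. First I would dispose of the trivial case $m\le 1$: then $K'$ is a $1$-braid in $V'$, hence isotopic to the core of $V'$, so $K=K'_n$ is the unknot and the statement holds vacuously by \cite{schar-thom}. So assume $m\ge 2$ and fix a cosmetic crossing circle $L$ of order $q$ with crossing disk $D$. Just as in Lemma~\ref{lem:LinV}, cosmeticity of $L$ forces $M_{K\cup L}$ to be irreducible, so we may pick a minimal genus Seifert surface $S$ for $K$ in $\overline{S^3-\eta(L)}$ with $S\cap D$ a single arc $\alpha$ joining the two points of $K\cap D$, and by Corollary~2.4 of \cite{gabai} this $S$ is of minimal genus in $S^3$.

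Next I would put $S$ in good position with respect to $T=\partial V'$: isotope $S$, keeping $S\cap D=\alpha$ and keeping $|S\cap T|$ minimal, so that $\C:=S\cap T$ consists of curves essential in both $S$ and $T$. The homology class of $\C$ in $H_1(T)$ is then determined. Reading $\C=\partial(S\cap V'')$ shows that $[\C]$ vanishes in $H_1(V'')$, hence is a multiple of the meridian $\lambda$ of $V''$ --- which, since $V'$ is standard, is the $0$-framed longitude of the core of $V'$. Reading $\C$ as the rest of $\partial(S\cap V')$, together with $[K]=m[\,\mathrm{core}\,]$ in $H_1(V')$, shows the multiple is $m$; so $[\C]=m\lambda$. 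As the components of $\C$ are parallel, essential, and therefore primitive, each one must be a copy of $\lambda$ --- i.e.\ a longitude of $V'$, equivalently a meridian of $V''$ --- and there are at least $m$ of them.

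Consequently $S_0:=S\cap V''$ is an incompressible surface properly embedded in the solid torus $V''$ (a compressing disk for $S_0$ in $V''$ would compress $S$ in $M_K$), and all of its boundary curves are meridians of $V''$. An incompressible surface in a solid torus is a disjoint union of meridian disks and boundary-parallel annuli; here the annular pieces have meridional boundary, so an innermost one can be isotoped across $T$ into $V'$, lowering $|\C|$ and contradicting its minimality unless there are none. Hence $S_0$ is a disjoint union of meridian disks $E_1,\dots,E_k$ of $V''$. The key observation is that we need not push all of $S$ into $V'$ --- which can genuinely fail, e.g.\ if $K$ is unknotted --- but only the arc $\alpha$: each component of $\alpha\cap S_0$ is an arc lying in some $E_i$ with both endpoints on $\partial E_i$, so we may isotope $\alpha$ inside $S$, pushing it successively across these disks, until $\alpha\subset S\cap V'\subset V'$.

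Realizing the last isotopy of $\alpha$ (which takes place within $S$) as an ambient isotopy of $S^3$ that fixes $K$ pointwise and $S$ setwise, we carry $D$ to a crossing disk $D'$ for $K$ with $S\cap D'$ a single arc in $V'$; shrinking $D'$ to a neighborhood of that arc orthogonal to $S$ puts $D'$, and hence $L':=\partial D'$, inside $V'$, while $L'$ is isotopic to $L$ in $S^3\setminus K$. This is what the lemma asserts. I expect the main points needing care to be (i) arranging the isotopies of $S$ so that minimizing $|S\cap T|$ does not destroy the condition $S\cap D=\alpha$, and (ii) the identification of the curves of $\C$ as longitudes of $V'$ and of $S_0$ as meridian disks: it is precisely here that standardness of $V'$ enters, since for a knotted companion solid torus $\C$ could contain other essential curves and $S_0$ could be essential in the exterior --- which is exactly the situation excluded by the ``no essential annuli'' proviso of Lemma~\ref{lem:LinV}.
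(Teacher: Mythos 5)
Your argument is essentially the paper's: after arranging that $S\cap D$ is a single arc $\alpha$ and invoking Gabai's result to get a surface of minimal genus in $S^3$, both proofs reduce the pieces of $S$ in the complementary solid torus $W=\overline{S^3-V'}$ to meridian disks (after discarding boundary-parallel annuli) and then push only the arc $\alpha$ --- not all of $S$ --- across those disks by an isotopy supported in $S$ and fixing $K$, finally carrying $L$ and the crossing disk along inside a neighborhood of $\alpha$. One small internal inconsistency worth fixing: you cannot normalize $\C=S\cap T$ to consist of curves essential in $S$ as well as in $T$, since your own conclusion is that $S\cap W$ consists of disks, whose boundary curves by definition bound disks in $S$; this is harmless, because your homology computation and the incompressibility argument only ever use essentiality in $T$ (and minimality of $|S\cap T|$), so simply drop the ``essential in $S$'' clause.
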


\begin{proof} Suppose that the cosmetic crossing change of $K$ is of order $q$.  Let
 $S$ be a minimal genus  Seifert surface for $K$ in $\overline{S^3 - \eta(L)}$, and let $D$ be the crossing disk for $K$ which is bounded by $L$. 
 As in the proof of Lemma \ref{lem:LinV},
 we  may isotope $S$ so that $S \cap D$ is a single embedded arc $\alpha$.  Then performing $(-1/q)$-surgery at $L$ twists both $K$ and $S$, producing a surface $S(q) \subset M(q)$ which is a Seifert surface for $K(q)$.  As in the proof of Lemma \ref{lem:LinV},
 we conclude that $S$ and $S(q)$ are minimal genus  surfaces in $S^3$ for $K(0)$  and $K(q)$, respectively. 
 
Let $W$ be the solid torus $\overline{S^3-V'}$.  Since $S$ in minimal genus, each component of $S \cap W$ is incompressible in $W$ and therefore is either a disk or an annulus which is parallel to an annulus in $T$.  We can isotope $S$ to remove the annular components of $S \cap W$, so we may assume that each component $C\in T\cap S$  bounds a disk $D_C \subset S$ in the complement of $V$. For each such disk, the intersection  $\alpha\cap D_C$ is a collection of properly embedded arcs in $D_C$. Each of these arcs can be  isotoped onto $\partial D_C\subset T$ by an isotopy on $D_C$ relative the boundary points of $\alpha$.  This process will isotope $\alpha$ into $V$ by an isotopy of $S$ which fixes $\partial S = K$. Since $L$ lies in small neighborhood of $\alpha$, by further isotoping each arc of $\alpha \cap T$ into $\rm{int}(V)$, we may bring $L$ and $D$ into $V$, as desired. 
  \end{proof}

We will also need the following lemma, which is proved by arguments similar to those in the proofs of Lemmas \ref{lem:LinV} and \ref{lem:LinVtwist}.  
\begin{lem}[Lemma 4.6 of \cite{kalf-lin}]\label{lem:KinV}
Let $V \subset S^3$ be a knotted solid torus such that $K \subset \rm{int}(V)$ is a knot which is essential in $V$ and $K$ has a crossing disk $D$ with $D \subset \rm{int}(V)$.  If $K$ is isotopic to $K(q)$ in $S^3$, then $K(q)$ is also essential in $V$.  Further, if $K$ is not the core of $V$, then $K(q)$ is also not the core of $V$.
\end{lem}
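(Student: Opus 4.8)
The plan is to mimic the cut-and-paste surgery arguments used in the proofs of Lemmas \ref{lem:LinV} and \ref{lem:LinVtwist}, but now with the roles reversed: instead of pushing the crossing disk into $V$, we already have $D\subset \mathrm{int}(V)$ and we must detect the geometric essentiality of $K(q)$ inside $V$. Suppose for contradiction that $K(q)$ is not essential in $V$, so $K(q)$ lies in a ball $B\subset \mathrm{int}(V)$ or, more precisely, $K(q)$ is contained in a $3$-ball in $V$ (equivalently, the meridian disk of $V$ can be isotoped off $K(q)$). Equivalently, there is a meridian disk $\Delta$ of $V$ disjoint from $K(q)$. We want to produce from $\Delta$ a meridian disk of $V$ disjoint from $K$, contradicting the hypothesis that $K$ is essential in $V$.

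The first step is to set up surfaces that survive the $(-1/q)$ surgery. Take a minimal genus Seifert surface $S$ for $K$ in $\overline{S^3-\eta(L)}$ with $S\cap D$ a single arc $\alpha$; $(-1/q)$-surgery along $L$ then produces a Seifert surface $S(q)$ for $K(q)$, and by Gabai's Corollary 2.4 of \cite{gabai} (using irreducibility of $M_{K\cup L}$, which holds since $K$ is essential in the knotted torus $V$ and hence $M_{K\cup L}$ is irreducible) $S$ and $S(q)$ are minimal genus Seifert surfaces for $K$ and $K(q)$. Now the key observation is that everything happens inside $V$: the crossing disk $D$, and hence the crossing circle $L$ and the surgery, are supported in $\mathrm{int}(V)$, so the solid torus $W=\overline{S^3-V}$ is untouched by the surgery and $K(q)$ still lies in $\mathrm{int}(V)$. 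The next step is to analyze $S(q)\cap \partial V$: since $\partial V$ is incompressible in $W$ (as $V$ is knotted) and $S(q)$ is minimal genus, we may isotope $S(q)$ rel $\partial$ so that $S(q)\cap W$ consists of annuli parallel into $\partial V$, and after removing those we get $S(q)\subset V$. Then $S(q)\subset V$ is a Seifert surface for $K(q)$ inside $V$; if $K(q)$ bounded a disk in a ball in $V$ it would be null-homologous in $V$, but having a Seifert surface in $V$ is compatible with that, so this alone is not the contradiction — we need the essentiality input more carefully.

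Here is where I expect the main obstacle, and how I would get around it. The cleaner route is to argue directly with the meridian disk rather than with Seifert surfaces. Let $\Delta$ be a meridian disk of $V$ with $\Delta\cap K(q)=\emptyset$; isotope $\Delta$ to meet the crossing disk $D$ (which lies in $\mathrm{int}(V)$) transversely and minimally. Since $\partial D=L$ is disjoint from $\Delta$, $\Delta\cap D$ is a collection of simple closed curves; an innermost such curve on $D$ bounds a subdisk $D'\subset D$ with $\partial D'\subset \Delta$, and using that $\Delta$ is a disk we can surger $\Delta$ along $D'$ and, by a standard innermost-disk argument, remove all intersections, so we may assume $\Delta\cap D=\emptyset$. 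Reversing the $(-1/q)$-surgery at $L$ — i.e. performing the inverse twist — carries $K(q)$ back to $K$; but $\Delta$ is disjoint from $D$ and from $L$, so the twist along $D$ does not move $\Delta$, and it carries $K(q)\cap\Delta=\emptyset$ to $K\cap\Delta=\emptyset$. Thus $\Delta$ is a meridian disk of $V$ disjoint from $K$, contradicting that $K$ is essential in $V$. For the last sentence of the lemma: if $K(q)$ were isotopic in $V$ to the core of $V$, then since the $(-1/q)$-twist is supported in a ball neighborhood of $D\subset \mathrm{int}(V)$ and the core can be isotoped off $D$ (it is isotopic to a core disjoint from any fixed ball in $\mathrm{int}(V)$), reversing the surgery would show $K$ is isotopic to the core of $V$, contrary to hypothesis; a small amount of care is needed to make ``isotopic to a core'' interact correctly with the twist, which is the one routine technical point I would spell out. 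Alternatively one cites \cite{kalf-lin} directly, as the statement is Lemma 4.6 there, and only reproduces the proof for completeness.
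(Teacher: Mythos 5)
There is a genuine gap, and it is located exactly where you flagged ``the main obstacle'': the claim that the meridian disk $\Delta$ can be isotoped off the crossing disk $D$ while remaining disjoint from $K(q)$. First, two local problems: you assert $L\cap\Delta=\emptyset$ without justification ($\partial\Delta\subset\partial V$ misses $L$, but ${\rm int}(\Delta)$ need not, so $\Delta\cap D$ may contain arcs you never address); and the surgery on $\Delta$ along an innermost subdisk $D'\subset D$ only preserves $\Delta\cap K(q)=\emptyset$ if $D'$ misses the two points of $D\cap K(q)$, which you cannot arrange in general (a curve of $\Delta\cap D$ bounding a subdisk of $D$ containing both intersection points is perfectly possible). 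More decisively, no repair of this step can succeed, because your argument never uses the two hypotheses that $V$ is \emph{knotted} and that $K\simeq K(q)$ in $S^3$ --- and without the knottedness hypothesis the statement is false. Take $V$ to be an unknotted solid torus and $K$ the untwisted Whitehead pattern, with $D$ the crossing disk at the clasp (which lies in ${\rm int}(V)$). An order-$(\pm1)$ generalized crossing change there unclasps the pattern, so $K(q)$ bounds an embedded disk in ${\rm int}(V)$ and is inessential, while $K$ has wrapping number $2$ and is essential; both are unknots in $S^3$, so $K\simeq K(q)$. Your ``cleaner route'' applied verbatim to this example would produce a meridian disk missing $K$, a contradiction; hence in that example $\Delta$ cannot be pushed off $D$, and the same obstruction can occur under the hypotheses of the lemma. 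The argument for the final sentence has the same defect: isotoping $K(q)$ to a core disjoint from $D$ does not commute with reversing the twist, since the twist is only defined on $V-\eta(L)$ relative to the actual position of $D$, and again knottedness of $V$ is never invoked although it is needed.

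For what it is worth, the paper does not prove this lemma at all --- it is quoted as Lemma 4.6 of \cite{kalf-lin}, with the remark that the proof is ``by arguments similar to'' Lemmas \ref{lem:LinV} and \ref{lem:LinVtwist}. Those arguments are the ones you set up in your second paragraph and then abandoned: minimal genus Seifert surfaces surviving the surgery via Gabai's Corollary 2.4, together with the incompressibility of $T=\partial V$ coming from the knottedness of $V$ and the essentiality of $K$. A correct proof has to run through that machinery (essentiality of $K(q)$ in $V$ is detected by incompressibility of $T$ in $M_{K(q)}$, which is controlled by the behavior of the essential torus $T\subset M_{K\cup L}$ under the two Dehn fillings and by $M_K\cong M_{K(q)}$), rather than through a cut-and-paste on the meridian disk. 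Citing \cite{kalf-lin} directly, as you suggest in your last sentence, is the honest fallback; the reproduction ``for completeness'' that you offer is not yet a proof.
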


We close the section with the following lemma, which discusses the interplay of nugatory crossing changes with satellite operations.

\begin{lem}\label{lem:nugasat} Let $K'$ be a  knot that is essentially embedded in a standard solid torus $V'$ and let $L'$ be a crossing circle for $K'$ that lies in $V'$. Suppose that there is an orientation preserving homeomorphism
$h: V' \to V'$ that takes the preferred longitude of $V'$ to itself and such that $h(K'(q))=K'(0)$.  Then if  $L'$ is nugatory for $K'$ in $S^3$,  it is also
nugatory for $K'$ in $V'$. 
\end{lem}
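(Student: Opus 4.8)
The plan is to argue contrapositively and directly: assuming $L'$ is nugatory for $K'$ in $S^3$, produce a disk bounded by $L'$ in the complement of $K'$ that lies inside $V'$. Since $L'$ is nugatory in $S^3$, it bounds an embedded disk $E \subset S^3 \setminus \eta(K')$. First I would put $E$ in general position with respect to the torus $T' = \partial V'$, so that $E \cap T'$ is a collection of simple closed curves. An innermost such curve on $E$ bounds a subdisk $E_0 \subset E$ with ${\rm int}(E_0)$ disjoint from $T'$, hence $E_0$ lies entirely in $V'$ or entirely in $W' := \overline{S^3 \setminus V'}$. The goal of the first stage is a standard innermost-disk surgery: I want to remove all curves of $E \cap T'$ that are inessential on $T'$, and then deal with the essential ones.

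For an innermost curve $c$ bounding $E_0$: if $c$ is inessential on $T'$ it bounds a disk $F \subset T'$, and (after pushing $F$ slightly off $T'$ into the appropriate side) I can replace $E_0$ by $F$, reducing $|E \cap T'|$; this replacement stays in the complement of $K'$ because $F \subset T'$ is disjoint from $K' \subset {\rm int}(V')$. After finitely many such moves, every component of $E \cap T'$ is essential on $T'$, i.e. a longitude of $V'$ (it cannot be a meridian of $V'$ since it bounds in $W'$, nor meridian of $W'$ since it bounds a subdisk of $E$ inside... here one uses that an innermost subdisk in $W'$ with essential boundary would be a meridian disk of $W'$, forcing $c$ to bound in $V'$ on the other side as well — a longitude). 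The key point: if $E \cap T' \neq \emptyset$ after this reduction, then an innermost subdisk of $E$ is a meridian disk $E_0$ of $W'$, so $\partial E_0$ is a longitude $\ell$ of $V'$; but $\ell$ bounds $E_0 \subset W'$ disjoint from $K'$, which (since $K'$ is essentially embedded in $V'$, so not contained in a ball, and $E_0 \cup (\text{a disk in } V' \text{ it cobounds})$...) — more carefully, I would instead observe that once all intersections are longitudes, the outermost annulus of $E$ cut along $T'$ that lies in $V'$, together with $T'$, lets me isotope $E$ to reduce intersections, OR $E$ becomes disjoint from $T'$ entirely. Either $E \subset V'$, and we are done, or $E \subset W'$; in the latter case $L' \subset V'$ also lies in a ball in $W'$, impossible since $L'$ is a crossing circle for $K'$ and links $K'$ nontrivially — contradiction, so in fact $E$ can be taken inside $V'$.

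The role of the homeomorphism $h$ is the point I expect to be the genuine obstacle, and it enters as follows. The above argument shows: \emph{if} $L'$ bounds a disk in $S^3 \setminus \eta(K')$ and $L' \subset V'$, \emph{then} $L'$ bounds such a disk inside $V'$ — but this uses that $K'$ meets every meridian disk of $V'$ (so no longitudinal annulus of $E$ can be absorbed trivially through $V'$), which is exactly "essentially embedded." However, the subtlety the hypothesis on $h$ guards against is that $L'$, viewed in $V'$, might only become nugatory \emph{after} the crossing change, i.e. one must be careful that "$L'$ is a crossing circle for $K'(0)$ in $V'$" and not merely for $K'(q)$. Since $h(K'(q)) = K'(0)$ is an orientation-preserving self-homeomorphism of $V'$ preserving the longitude, $h$ extends over $W'$ (by coning or by the standard fact that such a map of $\partial V'$ extends over the complementary solid torus), hence to an orientation-preserving homeomorphism of $S^3$; this guarantees the notion "nugatory in $V'$" is insensitive to replacing $K'(0)$ by $K'(q)$, which is what makes the statement usable in the proof of Theorem \ref{thm:twist}. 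So after establishing $E \subset V'$ for $K'$ directly, I would simply record that the disk $E \subset V' \setminus \eta(K')$ witnesses that $L'$ is nugatory for $K'$ in $V'$, which completes the proof; the hypothesis on $h$ is what ensures this conclusion transfers appropriately between $K'(0)$ and $K'(q)$ in the applications. The main work, then, is the innermost-disk/essential-annulus bookkeeping to get $E$ into $V'$, and the main care is in the essential-curve case, where one must rule out that the witnessing disk is forced into $W'$.
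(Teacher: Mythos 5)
There is a genuine gap, and it sits exactly where you hedge. Your plan is to take the disk $E\subset S^3-\eta(K')$ bounded by $L'$ and push it into $V'$ by innermost-curve surgery on $E\cap\partial V'$. Removing the curves that are inessential on $T'=\partial V'$ is fine, and you correctly observe that after this step every innermost subdisk of $E$ must lie in $W'=\overline{S^3-V'}$ as a meridian disk of $W'$ (an innermost subdisk in $V'$ with essential boundary would be a meridian disk of $V'$ missing $K'$, contradicting essentiality). But such meridian disks of $W'$ cannot be isotoped out of $W'$, and the resulting configuration --- e.g.\ a pair of pants $P\subset V'-\eta(K')$ with $\partial P=L'\cup\ell_1\cup\ell_2$, where $\ell_1,\ell_2$ are oppositely oriented preferred longitudes capped off by meridian disks of $W'$ --- is not an annulus you can absorb through $T'$, and your ``reduce intersections OR $E$ becomes disjoint from $T'$'' dichotomy simply does not cover it. In other words, you are implicitly asserting that $L'$ nugatory in $S^3$ with $L'\subset V'$ and $K'$ essential already forces $L'$ to be nugatory in $V'$, with no use of $h$; if that were true the hypothesis on $h$ would be superfluous, and you offer no argument ruling out the mixed configuration above. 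Your reading of the role of $h$ (as guarding against confusing $K'(0)$ with $K'(q)$) is not what is needed.

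For comparison, the paper does not attempt to isotope the given disk into $V'$ at all. It takes the sphere $D'\cup D''$ (crossing disk union nugatory disk), which splits $S^3$ into two balls and splits $V'$ along a planar surface $A\supset D'$ into pieces $V'_1, V'_2$; the hypothesis on $h$ (orientation-preserving, longitude-preserving, $h(K'(q))=K'(0)$) is used to arrange $h(A)=A$ and to produce an untwisting isotopy $f$ of $S^3$, supported away from one of the two balls, carrying $K'(q)$ to $K'(0)$. Restricting $f$ to $X=V'_2-\eta(K'\cap V'_2)$ gives a self-homeomorphism whose restriction to the boundary is a Dehn twist along $L'$, and McCullough's theorem then forces $L'$ to bound a disk in $V'_2\subset V'-\eta(K')$. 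That external input (McCullough) is the engine of the proof, and nothing in your proposal replaces it; you would need either to supply an argument eliminating the essential-intersection configurations described above or to switch to an argument of this Dehn-twist type.
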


\begin{proof} Note that the existence of  $h$ as in the statement above implies that $K'(q)$  and $K'$ are ambiently isotopic in $V'$. 
Suppose $L'$ is nugatory.  Then $L'$ bounds a crossing disk $D'$ in $V'$ and another disk $D''$ in the complement of $K'$.  We may assume $D' \cap D'' = L'$.  

Let $A_{V'}: = D' \cup (D'' \cap V')$. Now $A_{V'}$
contains a component  $(A, \partial A)\subset  (V', \partial V')$  that is a
a properly embedded  planar surface in $V'$, so each component of  $\partial A $ is a preferred longitude of $V'$, and we have $D'\subset A$.

 Extend the homeomorphism $h$ on $V' \subset S^3$ to a homeomorphism $H$ on all of $S^3$.  After an isotopy that fixes
 $h(K'(q))=K'(0)$ 
  we may assume $H(D')$ is isotopic to $D'$ and $H(D'')$ is isotopic to $D''$.  In fact, this isotopy may be chosen so that $H(V')=V'$ still holds after the isotopy.  Thus, we may assume $h(A)=A$ and $A$ cuts $V'$ into two components, $V'_1$ and $V'_2$.  An example where $A$ is an annulus is shown in Figure \ref{fig:torus}.

\begin{figure}
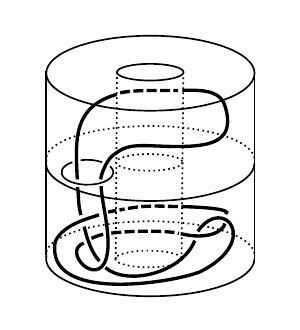
\caption{The solid torus $V'$, cut into two components by $A$, as in the proof of Lemma \ref{lem:nugasat}.}\label{fig:torus}
\end{figure}

First suppose that $ h( V'_i )= V'_i$ for $i=1,2$.  

The sphere $D' \cup D'' $ separates $S^3$ into two 3-balls, $B_1$ and $B_2$. Assume that  $V'_i \subset B_i$ for $i=1,2$.
Note that in $S^3$  there is an isotopy from $K'(q)$ to $K'(0)$ that untwists  the part of $K'(q)$ outside  of 
$B_1$ and leaves fixed  the part of $K'(q)$ that  lies outside a collar neighborhood of the boundary of $B_2$.  This isotopy gives an orientation-preserving homeomorphism
 $f: S^3 \longrightarrow S^3$ which brings  $K'(q)$ to $K'(0)$ .
Using $h$ above we identify $V'_i$ with $h(V'_i)$ and $K'(0)$ with $K'(q)$ and simply denote it by $K'$.

Let $X$ be the 3-manifold obtained from $V'_2$ by drilling out a neighborhood of $K' \cap V'_2$. 
Now  $f$ restricted to $X$ 
 is a homeomorphism whose   restriction to $\partial V'_2$ is a Dehn twist along $L'$. 
  By a result of McCullough (Theorem 1 of \cite{mccullough}), $L'$ bounds a disk in $V'_2 \subset (V' - \eta(K'))$.  Thus $L'$ is nugatory for $K'$ in $V'$.

If $ h$ maps $ V'_2$ to $V'_1$ and vice versa, then we may use a similar argument.  In this case, $f$ untwists the part of $K'(q)$ in $B_1$, leaving fixed the part of $K'(q)$ outside a collar neighborhood of $\partial B_1$, and we identify $V_2'$ with $h(V'_1)$.


\end{proof}

\smallskip

\section{Proofs of main results}\label{s:pfs}
We are now ready to prove the results stated in the introduction.

\subsection{Satellites} First, we discuss satellite knots of winding number zero, and we prove Theorem \ref{thm:pattern1}.  Recall that $\mathbb{K}$ is the class of knots which do not admit cosmetic generalized crossing changes.

\begin{mainthm}
Let $C$ be a prime knot that is not a torus knot or a cable knot and let $V'$ be a standardly embedded solid torus in $S^3$.  Let $K'\in \K$  and suppose that $K'$ is embedded in $V'$ so that it is geometrically essential and such that  $w(K',V')=0$ and $V'\setminus  \eta(K')$ is atoroidal.  Then any knot that is a satellite of $C$ with pattern $(V',  K')$  admits no  cosmetic generalized crossing changes of any order.
\end{mainthm}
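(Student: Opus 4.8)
The plan is to push a cosmetic crossing change on the satellite $K=f(K')$ down to the pattern $K'$ and then invoke $K'\in\K$. So suppose, for contradiction, that $L$ is a cosmetic crossing circle of some order $q$ for $K$; thus $K(q)$ is isotopic to $K$ in $S^3$ while $L$ is not nugatory. Since the satellite embedding $f$ preserves winding number, $w(K,V)=w(K',V')=0$; and since $C$ is prime but neither a torus knot nor a cable knot, its exterior $\overline{S^3-V}=M_C$ contains no essential annulus. Hence Lemma \ref{lem:LinV} applies: after an isotopy, $L$ and a crossing disk $D$ bounded by $L$ both lie in $V$. Transporting by $f^{-1}$ gives a crossing circle $L'=f^{-1}(L)$ and crossing disk $D'=f^{-1}(D)$ for $K'$ inside $V'$, with $f(K'(q))=K(q)$, where $K'(q):=K'(L',q)$. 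Since $K'$ is geometrically essential and $w(K',V')=0$, its wrapping number is at least $2$, so $K'$ is not the core of $V'$; Lemma \ref{lem:KinV}, applied to $K\subset V$ using $K\simeq K(q)$, then shows that $K(q)$, and hence $K'(q)$, is essential in $V'$ and not the core.

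The heart of the argument is that the companion torus $T=\partial V$ is canonical and is respected, up to isotopy, by the isotopy taking $K(q)$ to $K$. I would argue this through the torus decomposition. Identify $V\setminus\eta(K)$ with $W':=V'\setminus\eta(K')$ and $V\setminus\eta(K(q))$ with $W'(q):=V'\setminus\eta(K'(q))$ via $f$; each is irreducible with two torus boundary components and is not $T^2\times I$, and $W'$ is atoroidal by hypothesis. Because $C$ is prime and neither a torus knot nor a cable knot, the JSJ piece of $M_C$ meeting $\partial\eta(C)$ is not Seifert fibered; consequently the torus $T$ cannot lie in the interior of a geometric piece of $M_K=W'\cup_T M_C$ or of $M_{K(q)}=W'(q)\cup_T M_C$, so it is a JSJ torus of each. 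Now count JSJ pieces: since $W'$ is atoroidal, $M_K$ has exactly one more JSJ piece than $M_C$; as $M_K\cong M_{K(q)}$, the same holds for $M_{K(q)}$, which forces $W'(q)$ to be a single JSJ piece, i.e.\ $W'(q)$ is atoroidal as well. Thus in both $M_K$ and $M_{K(q)}$ the torus $T$ is the unique JSJ torus adjacent to the JSJ piece ($W'$, resp.\ $W'(q)$) that contains $\partial\eta(K)$ (resp.\ $\partial\eta(K(q))$). Any homeomorphism $M_{K(q)}\to M_K$ coming from the isotopy $K(q)\simeq K$ preserves the JSJ decomposition and the boundary torus, hence takes $T$ to $T$ up to isotopy; extending suitably to $S^3$ and using that $K$ lies on the $V$-side of $T$, the ambient isotopy may be adjusted so that it carries $V$ to $V$. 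This yields an orientation-preserving homeomorphism $h:V'\to V'$ with $h(K'(q))=K'$; as $h$ extends over $S^3$ it takes the preferred longitude of $V'$ to itself.

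Now I extract the contradiction. Extending $h$ to $S^3$ shows $K'(q)$ is isotopic to $K'$ in $S^3$; since $K'\in\K$, the crossing circle $L'$ cannot be cosmetic for $K'$, so $L'$ is nugatory for $K'$ in $S^3$. Applying Lemma \ref{lem:nugasat} with the homeomorphism $h$, we conclude that $L'$ is in fact nugatory for $K'$ in $V'$, i.e.\ $L'$ bounds a disk in $V'\setminus\eta(K')$. Pushing this disk forward by $f$ shows $L=f(L')$ bounds a disk in $V\setminus\eta(K)\subset S^3\setminus\eta(K)$, so $L$ is nugatory for $K$ --- contradicting the assumption that it is cosmetic. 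Hence $K$ admits no cosmetic generalized crossing change of any order.

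The step I expect to be the main obstacle is the canonicity of $T$ in the second paragraph: one must rule out both that the crossing change destroys atoroidality of the pattern piece and that the isotopy $K(q)\simeq K$ relocates the companion torus. The JSJ piece-count above is meant to handle this, and it is exactly where all the hypotheses on $C$ and the atoroidality of $V'\setminus\eta(K')$ enter; carrying out that bookkeeping carefully, together with the passage from ``$T$ is isotopic to its image'' to ``the ambient isotopy may be taken to fix $V$'', is the delicate part.
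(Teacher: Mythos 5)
Your argument reaches the same endgame as the paper --- an ambient homeomorphism $\Phi$ with $\Phi(V)=V$ carrying $K(q)$ to $K$, pulled back through $f$ to a homeomorphism $h:V'\to V'$ with $h(K'(q))=K'$, followed by the appeal to $K'\in\K$ and Lemma \ref{lem:nugasat} --- and it uses the same preparatory Lemmas \ref{lem:LinV} and \ref{lem:KinV}; but the central step, the canonicity of the companion torus $T$, is handled by a genuinely different mechanism. The paper works with the link exterior $M_{K\cup L}$: it takes a Haken collection of tori containing $T$, splits into cases according to whether $T$ is innermost with respect to $K$, classifies by hand the possible essential tori in $\overline{V-\eta(K\cup L)}$ (using the unknottedness of $L$ and the atoroidality of $V'\setminus\eta(K')$), and then invokes Lemma 2.3 of \cite{motegi} to rule out the two companion tori sitting in a ``knotted ball'' position. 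You instead discard $L$ after the first step and compare the JSJ decompositions of $M_K$ and $M_{K(q)}$ directly: the hypothesis that $C$ is prime and neither a torus knot nor a cable knot guarantees that the JSJ piece of $M_C$ meeting $\partial M_C$ is not Seifert fibered, so $T$ is a JSJ torus of both knot exteriors, and uniqueness of the JSJ decomposition together with your piece count pins $T$ down. This is cleaner and avoids both the case analysis and Motegi's lemma, at the cost of leaning on heavier standard machinery that you assert rather than source: you should cite the classification of Seifert-fibered JSJ pieces meeting the boundary of a knot exterior (torus-knot exteriors, cable spaces, composing spaces) when claiming that piece is not Seifert fibered; justify the gluing statement that the JSJ tori of $(V\setminus\eta(K))\cup_T M_C$ are exactly those of the two sides together with $T$; and note that ``$V\setminus\eta(K(q))$ is a single JSJ piece'' a priori allows it to be Seifert fibered rather than atoroidal (this is excluded because a cable or composing space forces nonzero winding number, and in any case the conclusion you actually need --- that $T$ is the unique JSJ torus adjacent to the piece containing $\partial\eta(K(q))$ --- holds either way). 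These are standard facts, not gaps in substance, so the proof is correct as a genuinely alternative route to the paper's.
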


\begin{proof}
Let $(V',K')$ be as in the statement of the theorem and consider the satellite map $f:(V',K') \to (V,K)$ with $\textrm{core}(V)=C$.  Suppose that $K$ admits an order-$q$ cosmetic crossing change, and let $D$ be the corresponding crossing disk with $L = \partial D$.   Let $T = \partial V$. 

 Since $C$ is a prime knot that is not a torus knot or a cable knot,
there are no essential annuli in $\overline{S^3 - V}$ (see, for example, Lemma 2 of \cite{lyon}). Hence, by Lemma \ref{lem:LinV}, we may assume $D \subset V$, so $T$ is also a companion torus for the satellite link $K \cup L$.

Now $K' \cup L'$ is a pattern link for $K \cup L$ with the satellite map $f: (V', K', L') \to (V,K,L)$ as above.  We will show that $L'$ is an order-$q$ cosmetic crossing circle for $K'$, which is a contradiction since $K' \in \K$.  

Since $L$ is cosmetic, $M = M_{K \cup L}$ is irreducible.  Consider a finite collection of tori  $\T$  for $M$ with the properties that $T\in \T$, the tori in $\T$ are essential, no two tori in $\T$ are parallel in $M$, and each component of $M$ cut along $\T$ is atoroidal.
By Haken's Finiteness Theorem (Lemma 13.2 of \cite{hempel})  such a collection exists.

We will call a torus $F \in \T$  \emph{innermost with respect to $K$} if $M$ cut along $\T$ has a component $N$ such that $\partial N$ contains $\partial \eta (K)$ and a copy of $F$.  In other words, $F \in \T$ is innermost with respect to $K$ if there are no other tori in $\T$ separating $F$ from $\eta(K)$.  
\vskip 0.06in

{\it{ Case 1.} }$T$ is innermost in $\T$ with respect to $K$.

Let $W= \overline{V - \eta(K \cup L)}$.  We first wish to show that $W$ is atoroidal.  By way of contradiction, suppose that there is an essential torus $F \subset W$.  Then $F$ bounds a solid torus in $S^3$, which we will denote by $\widehat{F}$.  

\vskip 0.04in

{\it Subcase 1.} $\widehat{F}$ is contained in $V$.

Since $T$ is innermost with respect to $K$, either $F$ is parallel to $T$ in $M$, or $K \subset V - \widehat{F}$.  
By assumption, $F$ is essential in $W$ and hence not parallel to $T \subset \partial W$.  So $K \subset V - \widehat{F}$ and, since $F$ is incompressible, $L \subset \widehat{F}$.  

If $\widehat{F}$ is knotted, then either $L$ is the core of $\widehat{F}$ or $L$ is a satellite knot with companion torus $\partial \widehat{F}$.  This contradicts the fact that $L$ is unknotted.  Hence $F$ is an unknotted torus.  By definition, $L$ bounds a crossing disk $D$.  Since $D$ meets $K$ twice, $D \cap \textrm{ext}(\widehat{F}) \neq \emptyset$.  We may assume that $D$ has been isotoped (rel boundary) to minimize the number of components in $D \cap F$.  Since an innermost component of $D - (F \cap D)$ is a disk and $L$ is essential in the unknotted solid torus $\widehat{F}$, $D \cap F$ consists of standard longitudes on the unknotted torus $F$.  Hence $D \cap \textrm{ext}(\widehat{F})$ consists of either one disk which meets $K$ twice, or two disks which each meet $K$ once.  In the first case, $L$ is isotopic to the core of $\widehat{F}$, which contradicts $F$ being essential in $W$.  In the latter case, the linking number $\textrm{lk}(K,\widehat{F})= \pm 1$.  So $K$ can be considered as the trivial connect sum $K \# U$, where $U$ is the unknot, and the crossing change at $L$ takes place in the unknotted  summand $U$.  
(See Figure \ref{fig:unknottedF}.)  
The unknot does not admit cosmetic crossing changes of any order by \cite{schar-thom}, so $K(q) \cong K \# K'$ where $K' \not\cong U$.  This contradicts the fact that $K(q) \cong K$. 

\begin{figure}
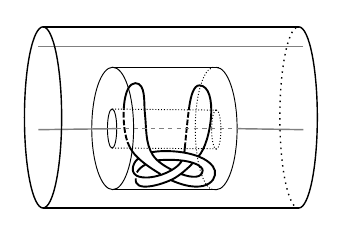
\caption{A portion of the solid torus $V$ containing the unknotted solid torus $\widehat{F}$ from Case 1, Subcase 1 of the proof of Theorem \ref{thm:pattern1}. }\label{fig:unknottedF}
\end{figure}

{\it Subcase 2.} $\widehat{F}$ is  not contained contained in $V$.   

Then $\widehat{F}$ is a knotted solid torus and the complement $S^3 \setminus \widehat{F}$ is contained in $V$.  This complement is a 3-ball with a knotted 1-handle drilled out, so $K$ must be contained in $\widehat{F} \cap V$.  (See Figure \ref{fig:figureA} for an example.)  Since $V' \setminus \eta(K')$ is atoroidal, $V \setminus \eta(K)$ is too; hence $L$ is also contained in $\widehat{F} \cap V$.

\begin{figure}
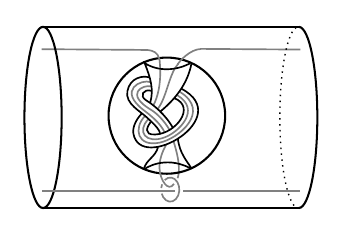
\caption{The knotted torus $F$ from Case 1, Subcase 2 of the proof of Theorem \ref{thm:pattern1}.  A generalized crossing change of order $q$
will make $F$ essential in the complement of $K(q)$. }\label{fig:figureA}
\end{figure}

The torus  $F$ becomes inessential in the complement of $K(0)$.  
On the other hand $F$ becomes essential in the complement of $K(q)$. This is, however, impossible since $L$ was assumed to be a cosmetic crossing circle, and thus the complement of $K(0)$ is homeomorphic to that of $K(q)$. Therefore this case will not happen.

Together Subcases 1 and 2 imply that $W$ is atoroidal in Case 1. Hence, $W' = \overline{V' - \eta(K' \cup L')}$ must be atoroidal as well.

If $K(q)$ is not geometrically essential in $V$, then, by Lemma \ref{lem:KinV}, $K(0)$ is also not essential in $V$.  But this contradicts $V$ being a companion for $K$, so $K(q)$ must be essential in $V$.  Hence $T$ is a companion torus for both $K(q)$ and $K(0)$.  Since $L$ is cosmetic, there is an ambient isotopy $\psi: S^3 \to S^3$ taking $K(q)$ to $K(0)$ such that $V$ and $\psi(V)$ are both knotted solid tori containing $K(0) = \psi(K(q)) \subset S^3$.  Then,  Lemma 2.3 of \cite{motegi} applies to conclude that there is an ambient  isotopy $\phi: S^3 \to S^3$,
 fixing $K(0)$, such that, if we let $\Phi = (\phi \circ \psi)$,
  one of the following holds:
 \begin{enumerate}
\item  $\Phi(T) \cap T = \emptyset$.
\item There exist disjoint meridian disks $D$ and $D'$ for both $V$ and $\Phi(V)$ such that some component of $V$ cut along $(D \sqcup D')$ is a knotted 3-ball in some component of $\Phi(V)$ cut along $(D \sqcup D')$.  (See, for example, Figure \ref{fig:knottedball}.)
\end{enumerate}

\begin{figure}
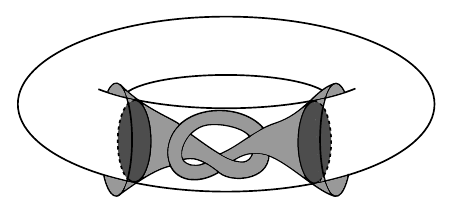
\caption{A knotted 3-ball created by the merdian disks $D$ and $D'$.}\label{fig:knottedball}
\end{figure}
 
Option (2) would imply the existence of an essential torus in $W$.
Since $W$ is atoroidal, we conclude that $(\phi \circ \psi)(T) \cap T = \emptyset$. Since $T$ is innermost with respect to $K$, $\Phi(T) \cap T = \emptyset$ implies $T$ and $\Phi (T)$ are parallel in $M_K$.  So, after an isotopy which fixes $K(0) \subset S^3$, we may assume that $\Phi(V) = V$.

Let $h = (f^{-1} \circ \Phi \circ f):V' \to V'$.  Then $h$ maps $K'(q)$ to $K'(0)$, and hence $K'(q)$ and $K'(0)$ are isotopic in $S^3$.  So either $L'$ gives an order-$q$ cosmetic generalized crossing change for the pattern knot $K'$, or $L'$ is a nugatory crossing circle for $K'$. Since $K' \in \K$,  $L'$ has to be nugatory. By Lemma \ref{lem:nugasat}, $L$ is nugatory for $K$, which contradicts our assumption that $L$ is cosmetic.

\vskip 0.05in

{{\it { Case 2.}} }$T$ is not innermost in $M$ with respect to $K$.

 Let $F$ be a torus in $\mathcal{T}$ which it is ``closer" to $K$ than $T$.  Since we assumed $V'\setminus  \eta(K')$ is atoroidal, $F$ must become inessential in the complements of $K(0)$ and $K(q)$ in $V$. Take $F$ to be a torus that is innermost to $K$ in $\mathcal{T}$. Then arguments similar to those in Subcases 1 and 2 above imply this $F$ cannot exist if $L$ is cosmetic.
\end{proof}

To conclude, we restate and prove Corollary \ref{cor:Wd} from Section 1.

\begin{maincor}
Let $K$ be a prime knot that is not a torus knot or a cable knot. 
Then no Whitehead double of $K$ admits a cosmetic generalized crossing change of any order.
\end{maincor}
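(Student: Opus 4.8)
The plan is to deduce Corollary \ref{cor:Wd} directly from Theorem \ref{thm:pattern1} by exhibiting every Whitehead double of $K$ as a satellite of $C=K$ whose pattern satisfies the hypotheses of that theorem. First I would fix notation: let $V'$ be a standardly embedded solid torus and let $W_n \subset V'$ denote the pattern of the $n$-twisted Whitehead double, i.e.\ the image of the untwisted (clasp) Whitehead pattern $W_0$ under the $n$-th power of a meridional Dehn twist of $V'$, in the sense of Definition \ref{def:twistK}. Then any Whitehead double of $K$ is a satellite of $C=K$ with pattern $(V', W_n)$ for an appropriate sign of the clasp and integer $n$, the companion solid torus being a tubular neighborhood of $K$ in $S^3$.

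Next I would verify the hypotheses of Theorem \ref{thm:pattern1} for $(V',W_n)$. The companion $C=K$ is prime and is neither a torus knot nor a cable knot by assumption. The pattern $W_n$ is geometrically essential in $V'$ (it has wrapping number $2$, so in particular it is not the core) and has winding number $w(W_n,V')=0$, since the two strands of the clasp run through a meridian disk with opposite signs and twisting does not change the algebraic count. Regarded as a knot in $S^3$, that is, forgetting $V'$, the pattern $W_n$ is the unknot when $n=0$ and a twist knot (hence a $2$-bridge knot) otherwise; in either case $W_n \in \K$ by \cite{schar-thom} and \cite{torisu}, so $W_n$ is an admissible pattern knot.

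It remains to check that $V' \setminus \eta(W_n)$ is atoroidal. Since a meridional Dehn twist of $V'$ is a self-homeomorphism of $V'$, the manifold $V' \setminus \eta(W_n)$ is homeomorphic to $V' \setminus \eta(W_0)$. Letting $c$ be the core of the complementary solid torus $\overline{S^3 - V'}$, one has $V'\setminus \eta(W_0) \cong S^3 \setminus \eta(W_0 \cup c)$, and $W_0 \cup c$ is precisely the Whitehead link; its complement is hyperbolic, hence atoroidal. With all hypotheses of Theorem \ref{thm:pattern1} in place, we conclude that no satellite of $K$ with pattern $(V', W_n)$ — equivalently, no Whitehead double of $K$ — admits a cosmetic generalized crossing change of any order.

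The only real content beyond this bookkeeping is the atoroidality of the pattern complement, and I expect the main (minor) obstacle to be stating it cleanly: one must be careful that the twisting is genuinely induced by a homeomorphism of $V'$, so that hyperbolicity of the Whitehead link complement transfers to every $V'\setminus\eta(W_n)$, and that whichever normalization of ``Whitehead double'' one adopts (choice of framing, sign of the clasp) indeed lands in the family $\{(V', W_n) : n \in \Z\}$ so that Theorem \ref{thm:pattern1} covers all cases.
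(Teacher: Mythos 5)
Your proof is correct and follows the same route as the paper: verify the hypotheses of Theorem \ref{thm:pattern1} for a Whitehead pattern and conclude. The only difference is bookkeeping. The paper fixes the pattern once and for all to be $(V',U)$ with $U$ the \emph{unknot} (the untwisted clasp pattern) and absorbs the twisting into the choice of satellite embedding $f:V'\to V$, i.e.\ into the framing; since the definition of satellite used in Theorem \ref{thm:pattern1} does not require $f$ to preserve preferred longitudes, this single pattern already accounts for every twisted double, and membership of the pattern in $\K$ is just \cite{schar-thom}. You instead fix the $0$-framed embedding and vary the pattern over the family $W_n$, which forces you to observe that $W_n$ is a twist knot, hence $2$-bridge, hence in $\K$ by \cite{torisu} --- a correct but strictly unnecessary extra input. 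Your verification that $V'\setminus\eta(W_n)$ is atoroidal (homeomorphic via the meridional twist to the Whitehead link complement, which is hyperbolic) is exactly the fact the paper asserts without elaboration, and your worry about framings is resolved by the paper's convention of letting the embedding, rather than the pattern, carry the twisting.
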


\begin{proof}
All Whitehead doubles admit a pattern $(V',U)$ where $U$ is the unknot, $w(U,V')=0$, and $U$ intersects every meridian disc of $V'$ at least twice.  Furthermore  $V' \setminus  \eta(U)$ is atoroidal.  Hence, the result follows immediately from Theorem \ref{thm:pattern1}.

\end{proof}

\subsection{Twisted knots} We now give the proofs of Theorem \ref{thm:twist} and Corollary \ref{cor:3brds}, which we also restate for convenience.

\begin{thmtw} 
Let $K' \in \K$ be embedded as   an $m$-braid in  a  standard solid torus $V'$.
Then, for every $n\in \Z$,  the twist knot  $K'_{n, V'}$  does not admit a strongly  cosmetic generalized crossing change of any order.

\end{thmtw}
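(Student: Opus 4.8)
The plan is to argue by contradiction. Suppose the twist knot $K = K'_{n, V'}$ admits an order-$q$ cosmetic crossing change along a crossing circle $L$; I will extract from this an order-$q$ cosmetic crossing circle for $K'$ itself, contradicting $K' \in \K$. One may assume $n \neq 0$ and $m \geq 2$, since otherwise $K = K'$ and there is nothing to prove. The first step is to apply Lemma~\ref{lem:LinVtwist} to isotope $L$, together with a crossing disk $D$ that it bounds, into $V'$. As in the earlier proofs, the hypothesis that $L$ is not nugatory forces $M_{K \cup L}$ to be irreducible --- a reducing sphere would confine $L$ to a ball in the complement of $K$ --- which is exactly what makes Gabai's results on minimal-genus Seifert surfaces, and hence Lemma~\ref{lem:LinVtwist}, available.

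The next step is to untwist. Let $\tau_n \colon V' \to V'$ be the $n$th power of the meridional Dehn twist of $V'$, supported near a meridian disk $D_0$, so that $K = \tau_n(K')$ by definition of the twist knot. One can arrange $D_0$ to be disjoint from $D$; then $L_0 := \partial D_0$ is an unknot disjoint from $K \cup L \cup D$, the twist $\tau_n$ fixes $L$ and $D$, and --- since the twist along $D_0$ and the crossing change along $D$ are supported in disjoint regions and therefore commute --- the disk $D$ is at the same time a crossing disk for $K'$ and $\tau_n\bigl(K'(L,q)\bigr) = K(L,q)$. Because $L$ is cosmetic, $K(L,q)$ is isotopic to $K$ in $S^3$, so $\bigl(K'(L,q)\bigr)_{n,V'}$ is isotopic to $K'_{n,V'}$ in $S^3$. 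The crux of the argument is to upgrade this last isotopy to an orientation-preserving homeomorphism $h \colon V' \to V'$ fixing the preferred longitude of $V'$ and carrying $K'(L,q)$ to $K'$. Given such an $h$, the crossing circle $L$ is either cosmetic or nugatory for $K'$; since $K' \in \K$ it must be nugatory for $K'$ in $S^3$, so Lemma~\ref{lem:nugasat} (applied with this $h$) produces a disk bounded by $L$ in $V' \setminus \eta(K')$. Applying $\tau_n$, which fixes $L$ and sends $K'$ to $K$, converts this into a disk bounded by $L$ in $V' \setminus \eta(K) \subset S^3 \setminus \eta(K)$; hence $L$ is nugatory for $K$, contradicting the assumption that it is cosmetic.

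The step I expect to be the main obstacle is the construction of $h$. In Theorem~\ref{thm:pattern1} the analogue is obtained from Haken finiteness and a Motegi-type lemma, relying crucially on the companion torus $\partial V'$ being essential in $M_K$. Here $V'$ is standardly, hence unknottedly, embedded, so $\partial V'$ always compresses to the outside and is never essential in $M_K$; that machinery is unavailable. What is needed instead is to show that the ambient isotopy of $S^3$ realizing $K(L,q) \cong K$ can be adjusted to preserve the braid-axis solid torus $V'$ --- equivalently, to preserve the braid axis $L_0$ up to isotopy in $S^3 \setminus K$ --- after which conjugating it by $\tau_n$ produces the required $h$. I would approach this through a careful study of the minimal-genus Seifert surfaces of $K$ and $K(L,q)$ relative to the meridian disks of $V'$, in the spirit of the proofs of Lemmas~\ref{lem:LinV} and~\ref{lem:LinVtwist}, using the braid hypothesis $\textrm{wrap}(K,V') = w(K,V') = m \geq 2$ to rigidly constrain how these surfaces meet $\partial V'$ and thereby locate the braid axis essentially canonically. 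This is the technical heart of the argument; once it is in place, the orientation and longitude bookkeeping needed to invoke Lemma~\ref{lem:nugasat} proceeds just as in the knotted case treated in Theorem~\ref{thm:pattern1}.
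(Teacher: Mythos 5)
Your overall architecture is exactly the paper's: isotope $L$ into the solid torus via Lemma~\ref{lem:LinVtwist}, pull the crossing change back to $K'$ by untwisting, conclude that the pulled-back crossing circle is either cosmetic (impossible since $K' \in \K$) or nugatory for $K'$, and in the nugatory case use Lemma~\ref{lem:nugasat} together with the twist homeomorphism to contradict the cosmeticity of $L$ for $K$. You also correctly locate the crux --- producing a homeomorphism $h\colon V' \to V'$ preserving the preferred longitude and carrying $K'(L,q)$ to $K'$ --- and you are right that the Haken/Motegi machinery from the proof of Theorem~\ref{thm:pattern1} is unavailable because $\partial V'$ is not essential in $M_K$.

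The genuine gap is that you do not actually construct $h$: you state that you ``would approach this through a careful study of the minimal-genus Seifert surfaces \dots{} relative to the meridian disks of $V'$'' and call it the technical heart, but no argument is given, so the proof is incomplete precisely at the step you yourself flag as the main obstacle. The paper closes this step differently and much more directly: since the winding number is unchanged by the crossing change, $0 \neq w(K(q),V) = m$, so $K(q)$ is essential in $V$; the ambient isotopy $\psi$ taking $K(q)$ to $K$ then exhibits $V$ and $\psi(V)$ as two \emph{unknotted} solid tori containing $K$ as a closed $m$-braid, and the paper invokes the fact that such tori can be matched by a further isotopy $\phi$ of $S^3$ fixing $K$, yielding $\Phi(V)=V$ with $\Phi = \phi\circ\psi$ and hence $h = f^{-1}\circ\Phi\circ f$. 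In other words, the missing ingredient is a uniqueness statement for the braid-axis solid torus up to isotopy rel $K$, not an analysis of Seifert surfaces; your proposed route might eventually be made to work, but as written it is a plan rather than a proof, and without this step the dichotomy ``cosmetic or nugatory for $K'$'' --- and everything after it --- has no foundation.
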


\begin{proof}
Suppose that for some $K' \in \K$ there is an embedding of $K'$ into a standard solid torus $V'$ as in the statement of the theorem, and that for some $n \in \Z$, the twist knot $K=K'_{n,V'}$ admits  an order-$q$ strongly cosmetic crossing change corresponding to a crossing circle $L$.
That is, $K(0)$ and $K(q)$ are isotopic in $S^3$.  Let $f: V'\rightarrow V=f(V')$ denote the twisting homeomorphism bringing $K'$ to $K$.

By Lemma \ref{lem:LinVtwist}, 
we may isotope $L$ into $V$. 
Now $L$ pulls back, via  $f$, to a crossing circle $L'$ of $K'$ in $V'$, and the generalized crossing change 
on $K$ pulls back to a generalized crossing change on $K'$. Let $K'(q)$ denote the result of this crossing change on $K'$.

Since the winding number doesn't change by twisting along a disc, we have  $0 \neq w(K,V)=w(K(q), V)=m$.  Thus both $K$ and $K(q)$ are essential in $V$.
By definition, there  is an orientation-preserving diffeomorphism $\Phi: S^3 \longrightarrow S^3$ that brings $K(q)$ to  $K=K(0)$ and $\Phi(V) =V$.
Let $h= (f^{-1} \circ \Phi \circ f):V' \to V'$.  Then $h$ maps $K'(q)$ to $K'$ and hence $K'(q)$ and $K'$ are isotopic in $S^3$. 
So either $L'$ gives an order-$q$ cosmetic generalized crossing change for $K'$, or $L'$ is a nugatory crossing circle for $K'$.
Since $K' \in \K$,  $L'$ has to be nugatory.  Since $h$ maps the preferred longitude of $V'$ to itself, $K'(q)$ and $K'$ are isotopic in $V'$.
By Lemma \ref{lem:nugasat},  $L'$ is nugatory for $K'$ in $V'$. But since $f$ is homeomorphism of the solid torus,
$L=f(L')$ is also nugatory for  $K=K'_{n,V'}$, which contradicts our assumption that $L$ is cosmetic.
\end{proof}

We are now ready to prove Corollary \ref{cor:twist1}.

 \begin{corfibered}Let $K$ be a fibered $m$-braid with $m\geq 2$. 
 Then for every $n\in \Z$, the twisted fibered braid $K_{n}$
 admits no  strongly cosmetic crossing
changes of any order.
\end{corfibered}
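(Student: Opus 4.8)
The plan is to deduce Corollary \ref{cor:twist1} from Theorem \ref{thm:twist} by exhibiting the twisted fibered braid $K_n$ as a twist knot $K'_{n,V'}$ of a knot $K'$ that lies in the class $\K$, embedded as an $m$-braid in a standard solid torus $V'$. The knot $K'$ we take is the original fibered $m$-braid $K$ itself, viewed as a closed braid in the standard solid torus $V'$ in which it is braided; then inserting $\Delta_m^{2n}$ into the strands is precisely the effect of the $n^{\mathrm{th}}$ power of a meridional Dehn twist of $V'$, so $K_n = K_{n,V'}$ in the notation of Definition \ref{def:twistK}. It therefore suffices to check the hypothesis of Theorem \ref{thm:twist}, namely that the fibered $m$-braid $K = K'$ belongs to $\K$. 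But this is exactly the content of the second author's theorem in \cite{kalf}: fibered knots admit no cosmetic generalized crossing changes, so every fibered knot is in $\K$. Hence $K' = K \in \K$, Theorem \ref{thm:twist} applies, and $K'_{n,V'} = K_n$ admits no cosmetic generalized crossing change of any order; in particular it admits no cosmetic (ordinary) crossing change.

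The one point that needs a word of care is the requirement $m \geq 2$ versus the genuine use of the solid-torus machinery: when $m = 1$ the ``braid axis'' is an unknot bounding a disk that meets $K$ once, the twisting is trivial up to isotopy, and $K_n$ is just $K$, so the statement is vacuous there; the hypothesis $m \geq 2$ guarantees $K'$ is geometrically essential in $V'$ (it meets each meridian disk $m \geq 2$ times and in particular is not the core), which is what lets us invoke Lemma \ref{lem:LinVtwist} and Lemma \ref{lem:nugasat} inside the proof of Theorem \ref{thm:twist}. I would simply remark that $K'$ is essentially embedded in $V'$ because it is an $m$-braid with $m \geq 2$, and that $V'$ is standardly embedded by construction.

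I do not expect a genuine obstacle here; the corollary is a packaging statement. The only thing to be explicit about is the identification of ``inserting $\Delta_m^{2n}$ into the strings of a closed $m$-braid'' with ``applying the $n^{\mathrm{th}}$ power of a meridional Dehn twist of the braided solid torus'' — this is immediate from the definitions (Definition \ref{def:twistK} and the definition of a twisted fibered braid), since a meridian disk of $V'$ meeting $K$ transversely in $m$ points is exactly a disk across which one inserts the full-twist braid $\Delta_m^2$. Once that identification is recorded, the corollary is an immediate consequence of Theorem \ref{thm:twist} combined with \cite{kalf}, and the proof is complete.
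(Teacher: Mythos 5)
Your proposal is correct and follows exactly the paper's argument: identify $K_n$ with the twist knot $K_{n,V'}$ of Definition \ref{def:twistK}, note that $K\in\K$ by \cite{kalf} since it is fibered, and apply Theorem \ref{thm:twist}. The extra remarks on $m\geq 2$ and the Dehn-twist identification are fine but not needed beyond what the paper records.
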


\begin{proof} Since $K$ is a fibered knot, by \cite{kalf},  we have $K' \in \K$.  Embed $K$ as a closed $m$-braid in a standardly embedded solid torus $V'$.
Then, for every $n\in \Z$, the twisted fibered braid $K_{n}$ is the result of $K$ under an order $n$ Dehn  twist  of $V$ along $\partial D$.
Thus $K_{n}$ is a twist knot of $K$ in the sense of  Definition \ref{def:twistK}, and the conclusion follows from
Theorem \ref{thm:twist}.
\end{proof}

\begin{rmk} {\rm  One may consider closures of \emph{positive} or \emph{homogeneous}  braids, which are known to be fibered by \cite{stallings}, and then add full twists to obtain broad classes  twisted fibered braids.}
 \end{rmk}

Next we discuss knots obtained as 3-braid closures. Let $\sigma_1$ and $\sigma_2$ denote the standard 
generators for $B_3$, the braid group on three strands.  (See Figure \ref{fig:3braid-generators}.) A positive word in $\sigma_1$ and
$\sigma_2^{-1}$ represents an alternating braid diagram. Let $\Delta^2=
(\sigma_1 \sigma_2)^3$ denote a full twist of all three strands, so $\Delta^2$
generates the center of $B_3$. For a braid $w \in B_3$, let
$\hat{w}$ denote the knot or link obtained as the standard closure of $w$. Note that
$\hat{w}$ only depends on the conjugacy class of $w$. 

\begin{figure}
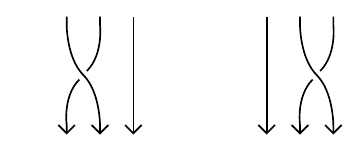
\caption{The generators $\sigma_1$ and $\sigma_2$ of the 3--string braid group.}\label{fig:3braid-generators}
\end{figure}

\begin{3braids} Let $K$ be a knot that can be represented as the closure of a 3-braid. Then $K$ admits no
strongly  cosmetic crossing changes of any order.
\end{3braids}

\begin{proof} Suppose that knot $K=\hat{w}$ for some $w\in B_3$.
By  Schreier \cite{schreier}, $w$ is conjugate to a
braid in exactly one of the following forms:
\begin{enumerate}
\item $\Delta^{2k} \sigma_1^{p_1} \sigma_2^{-q_1} \cdots \sigma_1^{p_s}
\sigma_2^{-q_s}$, where $k \in \Z$ and $p_i$, $q_i$, and $s$ are all
positive integers
\item $\Delta^{2k} \sigma_1^p\ $ for some $k, p \in \Z$
\item $\Delta^{2k} \sigma_1 \sigma_2\ $ for some $k \in \Z$
\item $\Delta^{2k} \sigma_1 \sigma_2 \sigma_1\ $ for some $k \in \Z$
\item $\Delta^{2k }\sigma_1 \sigma_2 \sigma_1 \sigma_2\ $ for some $k \in \Z$.
\end{enumerate}

This form is unique up to cyclic permutation
of the word following $\Delta^{2k}$.  Since we are concerned with knots only and $\Delta^2$ is a pure braid, we need not consider cases $(2)$ and $(4)$.

In each of the remaining cases, the 3-braid is a product of $ \Delta^{2k}$ and a homogeneous braid. The closures of homogeneous braids
are fibered by \cite{stallings}. Thus in each case the closure of the 3-braid is a twisted fibered braid. 
Corollary \ref{cor:twist1} applies to give the desired conclusion. 
\end{proof}

\bibliographystyle{amsplain}
\bibliography{references}

\end{document}